\def\zmod#1{\,\,({\rm mod}\,\,#1)}
\newcommand{\F}{\mathbb{F}}
\newcommand{\Q}{\mathbb{Q}}
\newcommand{\Z}{\mathbb{Z}}
\newcommand{\C}{\mathbb{C}}
\newcommand{\mce}{\Or_{E}}
\newcommand{\mcf}{\Or_{F}}
\newcommand{\mcl}{\Or_{L}}
\newcommand{\mcc}{\mathcal{C}}
\newcommand{\G}{{\rm Gal}}
\newcommand{\Or}{\mathcal{O}}
\newcommand{\rank}{{\rm rank}}
\DeclareMathOperator{\frob}{Frob}
\DeclareMathOperator{\Cl}{Cl}
\DeclareMathOperator{\Mat}{Mat}
\DeclareMathOperator{\rk}{rank}
\DeclareMathOperator{\Nm}{Nm}
\DeclareMathOperator{\SNR}{SNR}    		
\DeclareMathOperator{\nat}{nat}    	 
\DeclareMathOperator{\disc}{disc}   		
\DeclareMathOperator{\nr}{nr}
\DeclareMathOperator{\tr}{tr}
\DeclareMathOperator{\Tr}{Tr}
\begin{document}

\title{Natural orders for asymmetric space--time coding: minimizing the discriminant}

\author{Amaro Barreal \and Capi~Corrales~Rodrig\'{a}\~{n}ez \and Camilla Hollanti}

\institute{A. Barreal \at
           Department of Mathematics and Systems Analysis, Aalto University, Finland \\
           Tel.: +358-50-5935194 \\
           \email{amaro.barreal@aalto.fi}           
           \and
           C. Corrales Rodrig\'{a}\~{n}ez \at
           Faculty of Mathematical Sciences, Complutense University of Madrid, Spain 
           \and
           C. Hollanti \at 
		   Department of Mathematics and Systems Analysis, Aalto University, Finland  
		  }

\maketitle 

\begin{abstract}
Algebraic space--time coding --- a powerful technique developed in the context of multiple-input multiple-output (MIMO) wireless communications --- has profited tremendously from tools from Class Field Theory and, more concretely, the theory of central simple algebras and their orders. During  the last decade, the study of space--time codes for practical applications, and more recently for future generation (5G+) wireless systems, has provided a practical motivation for the consideration of many interesting mathematical problems. 
One such problem is the explicit computation of orders of central simple algebras with small discriminants. In this article, we consider the most interesting asymmetric MIMO channel setups and, for each treated case, we provide explicit pairs of fields and a corresponding non-norm element giving rise to a cyclic division algebra whose natural order has the minimum possible discriminant. 

\keywords{Central Simple Algebras \and Division Algebras \and Discriminant \and Natural Orders \and MIMO \and Space--Time Coding.}
\end{abstract}

\section{Introduction}
\label{sec:intro}

The existing contemporary communications systems can be abstractly characterized by the conceptual seven-layer Open Systems Interconnection model. The lowest (or first) layer, known as the \emph{physical layer}, aims to describe the communication process over an actual physical medium. Due to the increasing demand for flexibility, information exchange nowadays often occurs via antennas at the transmitting and receiving end of a wireless medium, \emph{e.g.}, using mobile phones or tablets for data transmission and reception.  
An electromagnetic signal transmitted over a wireless channel is however prone to interference, fading, and environmental effects caused by, \emph{e.g.}, surrounding buildings, trees, and vehicles, making reliable wireless communications a challenging technological problem. 

With the advances in communications engineering, it was soon noticed that increasing the number of spatially separated transmit and receive antennas, as well as adding redundancy by repeatedly transmitting the same information encoded over multiple time instances\footnote{'Time instances' are commonly referred to as \emph{channel uses}.}, can dramatically improve the transmission quality. A code representing both diversity over time and space is thus called a \emph{space--time code}. Let us consider a channel with $n_t$ and $n_r$ antennas at its transmitting and receiving end, respectively, and assume that transmission occurs over $T$ consecutive time instances. If $n_t = n_r$, the channel is called \emph{symmetric}, and otherwise asymmetric, which more precisely typically refers to the case $n_r < n_t$. For the time being, a space--time code $\mathcal{X}$ will just be a finite collection of complex matrices in $\Mat(n_t\times T,  \C)$. The channel equation in this multiple-input multiple-output (MIMO) setting is given by
\begin{equation}
\label{eqn:mimo}
	Y_{n_r\times T} = H_{n_r\times n_t}X_{n_t\times T} + N_{n_r\times T}, 
\end{equation}
where $Y$ is the received matrix, and $X = \left[x_{ij}\right]_{i,j}\in\mathcal{X}$ is the \emph{space--time code} matrix. In the above equation, we adopt the Rayleigh fading channel model, \emph{i.e.}, the entries of the random \emph{channel matrix} $H = \left[h_{ij}\right]_{i,j}$ are complex variables with identically distributed real and imaginary parts,
\begin{align*}
	\Re(h_{ij}), \Im(h_{ij}) \sim \mathcal{N}(0,\sigma_h^2),
\end{align*}
yielding a Rayleigh distributed envelope 
\begin{align*}
	|h_{ij}| = \sqrt{\Re(h_{ij})^2+\Im(h_{ij})^2} \sim \text{Ray}(\sigma_h)
\end{align*} 
with scale parameter $\sigma_h$. We assume further that the channel remains static during the entire transmission of the codeword matrix $X$, and then changes independently of its previous state. The additive noise\footnote{The noise is a combination of thermal noise and noise caused by the signal impulse.} is modeled by the \emph{noise matrix} $N$, whose entries are independent, identically distributed complex Gaussian random variables with zero mean. 


Let us briefly discuss what constitutes a "good" code. Consider a space--time code $\mathcal{X}$, and let $X, X'$ be code matrices ranging over $\mathcal{X}$.
Two basic design criteria can be derived in order to minimize the probability of error \cite{TSC}. 
\begin{itemize}
	\item[i)] The \emph{diversity gain} of a code is the asymptotic slope of the error probability curve with respect to the signal-to-noise ratio ($\SNR$) in a $\log-\log$ scale, and relates to the minimum rank $\rk(X-X')$ over all pairs of distinct code matrices $(X,X') \in \mathcal{X}^2$. The minimum rank of $\mathcal{X}$ should satisfy 
	\begin{align*}
		\min_{X \neq X'} \rk(X-X') = \min\{n_t,T\},
	\end{align*} 
	in which case $\mathcal{X}$ is called a \emph{full-diversity} code.

	\item[ii)] 
	The \emph{coding gain} measures the difference in $\SNR$ required for two different codes to achieve the same error probability. For a full-diversity code this is proportional to the determinant
\begin{align*}
	\det \left( (X-X')(X-X')^{\dagger}\right).
\end{align*}
\end{itemize}

We define the \emph{minimum determinant} of a code $\mathcal{X}$ as the infimum 
\begin{align*}
	\Delta_{\min}(\mathcal{X}) := \inf\limits_{X \neq X'}{\det \left( (X-X')(X-X')^{\dagger}\right)}
\end{align*}
as the code size increases, $|\mathcal{X}| \to \infty$. If $\Delta_{\min}(\mathcal{X})  > 0$, the space--time code is said to have the \emph{nonvanishing determinant} property \cite{BR}.  In other words, a nonvanishing determinant guarantees that the minimum determinant is bounded from below by a positive constant even in the limit, and hence the error probability will not blow up when increasing the code size. 


In 2003, the usefulness of central simple algebras to construct space--time codes meeting both of the above criteria was established in \cite{SRS}; especially of (cyclic) division algebras, for which the property of being division immediately implies full diversity. Thereupon the construction of space--time codes started to rely on cleverly designed algebraic structures,  leading to the construction of multiple extraordinary codes, such as the celebrated Golden code \cite{BRV}, or general Perfect codes \cite{ORBV,ESK}.
It was later shown in \cite{BR} that in a cyclic division algebra based code, achieving the nonvanishing determinant property can be ensured by restricting the entries of the codewords to certain subrings of the algebra alongside with a smart choice for the base field, and that ensuring nonvanishing determinants is enough to achieve the optimal trade-off between diversity and multiplexing. 

Further investigation carried out in \cite{HLL,VHLR} showed that codes constructed from orders, in particular \emph{maximal orders}, of cyclic division algebras performed exceptionally well. The main observation is that the discriminant of the order is directly related to the offered coding gain, and should be as small as possible in order to maximize the coding gain. 


Maximal orders were then the obvious candidates, as they maximize the normalised density of the corresponding lattice and hence also maximize the coding gain. Unfortunately, they are in general  very difficult to compute and may result in highly skewed lattices making the bit labeling a delicate problem on its own. Therefore, \emph{natural orders} with a simpler structure have become a more frequent choice as they provide a good compromise between the two common extremes: using maximal orders to optimize coding gain, on the one hand, and restricting to orthogonal lattices to simplify bit labeling, encoding, and decoding, on the other.


However, the current explicit constructions are typically limited to the symmetric case, while the asymmetric case remains largely open. The main goal of this article is to fill this gap,
though our interest is not to analyze the performance of explicit codes. Instead, we focus on the algebraic setup and provide lower bounds for the smallest possible discriminants of natural orders for the considered setups, and give explicit field extensions and corresponding cyclic division algebras meeting the lower bounds. 

This article is structured as follows. In Section~\ref{sec:stc} we will shortly introduce MIMO space--time coding and the construction of space--time codes using representations of orders in central simple algebras. Section~\ref{sec:nat_orders} contains the main results of this article. We will consider the most interesting asymmetric MIMO channel setups and fix $F = \Q$ or $F = \Q(i)$ as the base field to guarantee the nonvanishing determinant property\footnote{From a mathematical point of view, any imaginary quadratic number field would give a nonvanishing determinant, but the choice $\Q(i)$ matches with the quadrature amplitude modulation (QAM) commonly used in engineering.}. For each considered setup $(F, n_t, n_r)$, we will find an explicit field extension $F \subset L \subset E$ and an explicit $L$-central cyclic division algebra over $E$, such that the norm of the discriminant of its natural order is minimal. This will translate into the largest possible determinant (see \eqref{eqn:density} and \cite{VHLR,HL} for the proof) and thus provide us with the maximal coding gain one can achieve by using a natural order.

\section{Space--time codes from orders in central simple algebras}
\label{sec:stc}
From now on, and for the sake of simplicity, we set the number $n_t$ of transmit antennas equal to the number $T$ of time slots used for transmission and shortly denote $n:=n_t=T$. Thus, the considered codewords  will be square matrices.

\subsection{Space--time lattice codes}
\label{sec:sec2}

Very simplistically defined, a space--time code is a finite set of complex matrices. However, in order to avoid accumulation points at the receiver, in practical implementations it is convenient to impose an additional discrete structure on the code, such as a lattice structure. We define a \emph{space--time code} to be a finite subset of a \emph{lattice} 
$$
\Lambda =\left\{\sum_{i=1}^k z_i B_i\,|\, z_i\in\Z\right\}\subset \Mat(n,\C),
$$ 
where $\{B_1,\ldots,B_k\}\subset \Mat(n,\C)$ is a \emph{lattice basis}. We recall that a lattice in $\Mat(n,\C)$ is \emph{full} if $\rank(\Lambda) := k= 2 n^2$. We call a space--time lattice code \emph{symmetric}, if its underlying lattice is full, and \emph{asymmetric}\footnote{This definition relates to the fact that a symmetric code carries the maximum amount of information (\emph{i.e.}, dimensions) that can be transmitted over a symmetric channel without causing accumulation points at the receiving end. In an asymmetric channel, a symmetric code will result in accumulation points, and hence asymmetric codes, \emph{i.e.}, non-full lattices are called for. See \cite{HL} for more details.} otherwise. 

Due to linearity, given a lattice $\Lambda \subset \Mat(n,\C)$ and $X, X' \in \Lambda$,
\begin{align*}
	\Delta_{\min}(\Lambda) := \inf\limits_{X \neq X'} \det \left((X-X')(X-X')^{\dagger} \right) =
 \inf\limits_{X \in \Lambda\backslash\left\{0\right\}}{\left|\det(X)\right|^2}.
\end{align*}
This implies that any lattice $\Lambda$ satisfying the nonvanishing determinant property can be scaled so that $\Delta_{\min}(\Lambda) $ achieves any wanted nonzero value. 
Consequently, a meaningful comparison of different lattices requires some kind of normalization. To this end, 
consider the Gram matrix of $\Lambda$,
\begin{align*}
	G_{\Lambda} := \left[\Re\left(\Tr\left(B_i B_j^\dagger\right)\right)\right]_{1 \le i,j \le k},
\end{align*}
where $\Tr$ denotes the matrix trace. The volume $\nu(\Lambda)$ of $\Lambda$ is related to the Gram matrix as $\nu(\Lambda)^2 = \det(G_{\Lambda})$. 

\begin{itemize}
	\item[i)] The \emph{normalized minimum determinant} \cite{VHLR} of $\Lambda$ is the  minimum determinant of $\Lambda$ after scaling it to have a unit size fundamental parallelotope, that is,
		\begin{align*}
			\delta(\Lambda) = \frac{\Delta_{\min}(\Lambda)}{\nu(\Lambda)^{\frac{n}{k}}}. 
		\end{align*}
	\item[ii)] The \emph{normalized density} \cite{VHLR} of $\Lambda$ is 
		\begin{align}
		\label{eqn:norm_density}
			\mu(\Lambda) = \frac{\Delta_{\min}(\Lambda)^{\frac{k}{n}}}{\nu(\Lambda)}. 
		\end{align}
\end{itemize}

We  get the immediate relation $\delta(\Lambda) = \mu(\Lambda)^{{\frac{n}{k}}}$, from which it follows that in order to maximize the coding gain it suffices to maximize the density of the lattice. Maximizing the density, for its part, translates into a certain \emph{discriminant minimization problem} \cite{VHLR,HL}, as we shall see in Section~\ref{subsec:stc_orders} (cf. \eqref{eqn:density}). This observation is crucial and will be the main motivation underlying Section~\ref{sec:nat_orders}.

\subsection{Central simple algebras and orders}

We recall that a finite dimensional algebra  over a number field $L$ is an $L$-\emph{central simple algebra}, if its center is precisely $L$ and it has no nontrivial ideals. 
An algebra is said to be \emph{division} if all of its nonzero elements have a multiplicative inverse.
By \cite[Prop.~1]{SRS}, as long as the underlying algebraic structure of a space--time code is a division algebra, the full-diversity property of the code  will be guaranteed.
It turns out that if  $L$ is an algebraic number field, then every $L$-central simple algebra is a \emph{cyclic algebra} \cite[Thm.~32.20]{Re}. 

Let $E/L$ be a cyclic extension of number fields of degree $n$ with respective rings of integers $\Or_E$ and $\Or_L$, and cyclic Galois group $\G(E/L) = \langle \sigma \rangle$. We fix a nonzero element $\gamma\in L^\times$ and consider the right $E$-vector space
\begin{align*}
	\mcc := (E/L,\sigma,\gamma) = \bigoplus_{i = 0}^{n-1}u^i E,
\end{align*}
with left multiplication defined by $xu= u\sigma(x)$ for all $ x\in E$, and $u^n=\gamma$. The triple $\mcc$ is referred to as a \emph{cyclic algebra} of \emph{index} $n$.

The obvious choice of lattices in $\mcc$ will be its \emph{orders}. We recall that if $R \subset L$ is a Dedekind ring, an $R$-order in $\mcc$ is a subring $\Or \subset \mcc$ which shares the same identity as $\mcc$, is a finitely generated $R$-module, and generates $\mcc$ as a linear space over $L$. Furthermore, an order is \emph{maximal} if it not properly contained in any other $R$-order of $\mcc$. Of special interest in this article is the $\mcl$-module
\begin{align*}
	\Or_{\nat} := \bigoplus\limits_{i=0}^{n-1}{u^i\mce},
\end{align*}
which we refer to as the \emph{natural order} of $\mcc$. 

Throughout the paper, we will denote the relative field norm map of the extension $E/L$ by $\Nm_{E/L}$ and the absolute norm map  by $\Nm_E=\Nm_{E/\Q}$. The restriction of this map to orders may be specified in the notation as $\Nm_{\Or_E/\Or_L}$ and $\Nm_{\Or_E}=\Nm_{\Or_E/\Z}$.

If the element $\gamma$ fails to be an algebraic integer, then $\Or_{\nat}$ will not be closed under multiplication. Furthermore, a necessary and sufficient condition for an index-$n$ cyclic algebra $(E/L,\sigma,\gamma)$ to be division is that
\begin{align}
\label{eqn:non_norm}
	\gamma^{n/p} \notin \Nm_{E/L}(E^\times)
\end{align} 
for all primes $p \mid n$. This is a simple extension of a well-known result due to A. Albert, for more details and a proof see  \cite[Prop. 3.6]{VHLR}. In what follows we will refer to such a non-zero element $\gamma \in \mcl$ as a \emph{non-norm element} for $E/L$.  

\begin{remark}
We recall that given a Dedekind ring $R \subset L$ and an $R$-order $\Or$ with basis $\{x_1,\ldots, x_n^2\}$ over $R$, the $R$-discriminant of $\Or$ is the ideal  
\begin{align*}
	\disc(\Or/R) = \left(\det\left(\tr_{\mcc/L}(x_i x_j)_{i,j=1}^{n^2}\right)\right),
\end{align*}
where $\tr(\cdot)$ denotes the reduced trace, which will be defined in \eqref{eqn:lrr}.

While the ring of algebraic integers is the unique maximal order in an algebraic number field, an $L$-central simple division algebra may contain several maximal orders. They all share the same discriminant \cite[Thm.~25.3]{Re}, known as the discriminant $d_{\mcc}$ of the  algebra ${\mcc}$. Given two $\mcl$-orders $\Gamma_1$, $\Gamma_2$, it is clear that if $\Gamma_1\subseteq\Gamma_2$, then $\disc(\Gamma_2/\mcl) \mid \disc(\Gamma_1/\mcl$). Consequently, $d_{\mcc}|\disc(\Gamma/\mcl)$ for every $\mcl$-order $\Gamma$ in ${\mcc}$, and the ideal norm $\Nm_{\mcl}(d_{\mcc})$ is the smallest possible among all  $\mcl$-orders of $\mcc$.
\end{remark}

\subsection{Algebraic space--time codes from representations of orders}
\label{subsec:stc_orders}

Let $\mcc = (E/L,\sigma,\gamma)$ be a cyclic division algebra of index $n$. We fix compatible embeddings of $L$ and $E$ into $\C$, and identify $L$ and $E$ with their images under these embeddings. 
The $E$-linear transformation of $\mcc$ given by left multiplication by an element $c \in \mcc$ results in an $L$-algebra homomorphism
$\rho: \mcc \to \Mat(n,E),$
to which we refer to as the \emph{maximal representation}. 
An element $c = c_0 +  u c_1+ \cdots + u^{n-1} c_{n-1} \in \mcc$ can be identified via $\rho$
with the matrix 
\begin{equation}
\label{eqn:lrr}
	\rho(c) = \begin{bmatrix}
 c_0& \gamma\sigma(c_{n-1})&\gamma \sigma^2(c_{n-2})& \ldots &\gamma \sigma^{n-1}(c_{1})\\
c_1& \sigma(c_{0})&\gamma\sigma^2(c_{n-1})& \ldots &\gamma \sigma^{n-1}(c_{2})\\
\vdots & \vdots & \vdots & & \vdots \\
c_{n-1} & \sigma(c_{n-2}) & \sigma^2(c_{n-3})& \cdots &\sigma^{n-1}(c_{0})
\end{bmatrix}.
\end{equation}
The determinant $\nr_{\mathcal{C}/L}(c) := \det(\rho(c))$ and trace $\tr_{\mathcal{C}/L}(c) := \Tr(\rho(c))$ define the \emph{reduced norm} and \emph{reduced trace} of $c \in \mathcal{C}$, respectively. We may shortly denote $\nr=\nr_{\mathcal{C}/L}$ and $\tr=\tr_{\mathcal{C}/L}$, when there is no danger of confusion.

Next, given an order $\Or$ in $\mcc$, we may use the maximal representation to define an injective map $\rho: \Or \hookrightarrow \Mat(n,E) \subset \Mat(n, \C)$. If the center $L$ of the algebra is quadratic imaginary and $\Or$ admits an $\Or_L$ basis, then $\rho(\Or)$ is a lattice, and any finite subset $\mathcal{X}$ of $\rho(\Or)$ will be a space--time lattice code, in the literature often referred to as \emph{algebraic space--time code}. 	

\begin{remark}
Due to the algebra being division, the above matrices will be invertible, and hence any algebraic space--time code constructed in this way will have full diversity. Moreover, if $c \in \Or\backslash \left\{0\right\}$, we have $\nr(c) \in \Or_L\backslash\left\{0\right\}$ \cite[Thm. 10.1]{Re}, guaranteeing nonvanishing determinants for $L = \Q$ or quadratic imaginary.  
\end{remark}

We now relate the minimum determinant of a code to the density of the underlying lattice $\rho(\Or)$, which is the main motivation for choosing orders with small discriminant. If the center $L$ of the cyclic algebra is quadratic imaginary and the considered order $\Or$ admits an $\Or_L$-basis, the volume $\nu(\rho(\Or))$ of the lattice relates to the discriminant of the order as \cite{VHLR}
\begin{align}
\label{eqn:density}
	\nu(\rho(\Or)) = c(L,n)\left|\disc(\Or/\Or_L)\right|,
\end{align}
where $c(L,n)$ is a constant which depends on the center and $[E:L]$. Thus, for fixed minimum determinant, the density of the code (cf. \eqref{eqn:norm_density}) -- and consequently the coding gain -- is maximized by minimizing the discriminant of the order.

\begin{example}
\label{exp:golden}
Let $E/L$ be a quadratic real extension of number fields with Galois group $\G(E/L) = \langle \sigma \rangle$. Let $L$ be of class number 1 and  $\mcl$, $\mce = \mcl[\omega]$ the respective rings of integers.
We choose $\gamma \in \Or_L\setminus\left\{0\right\}$ such that $\gamma \notin \Nm_{E/L}(E^\times)$, and define the cyclic division algebra
\begin{align*}
	\mathcal{C} = (E/L,\sigma,\gamma) = E \oplus uE,
\end{align*} 
where $u^2 = \gamma$. We consider the natural order $\Or_\text{nat}$ of $\mcc$ and construct an algebraic space--time code as a finite subset 
\begin{align*}
\mathcal{X} \subset \left\{
\begin{bmatrix} x_1 + x_2\omega  & \gamma(x_3 + x_4\sigma(\omega)) \\ x_3 + x_4\omega & x_1 + x_2\sigma(\omega) \end{bmatrix} \ \middle|\ x_i\in \mcl
\right\}= \rho(\Or_\text{nat}).
\end{align*}
 
The choice of fields $(E,L) = (\Q(i,\sqrt{5}),\Q(i))$ and element $\gamma = i$ gives rise to the Golden algebra and to the well-known \emph{Golden code} \cite{ORBV}. 
\end{example}

The above example relates to the symmetric scenario, \emph{i.e.}, it has an underlying lattice that is full. Full lattices can be efficiently decoded when $n_r = n_t$, and the same lattice codes can also be employed when $n_r > n_t$.  There is no simple optimal decoding method for symmetric codes, however, when $n_r < n_t$\footnote{Having too few receive antennas will cause the lattice to collapse resulting in accumulation points, since the received signal now has dimension $2n_rn_t<2n_t^2$. Hence, partial brute-force decoding of high complexity has to be carried out.}. 

Building upon symmetric codes, we now briefly introduce \emph{block diagonal asymmetric} space--time codes \cite{HL}, better suited for the asymmetric scenario. 
 Let $F \subset L \subset E$ be a tower of field extensions with extension degrees $[E:L] = n_r$, $[L:F] = n$, and $[E:F]=n_t=n_rn$, and with Galois groups $\G(E/F) = \langle \tau\rangle $ and $\G(E/L) = \langle \sigma\rangle= \langle \tau^n \rangle$. We fix a non-norm element $\gamma \in \mcl\setminus\{0\}$, and consider the cyclic division algebra
\begin{align*}
	\mcc = (E/L,\sigma,\gamma) = \bigoplus\limits_{i=0}^{n_r-1}u^i E.
\end{align*}
Given any order  $\Or $  in $ \mcc$, we identify each element $c \in \Or$ with its maximal representation $\rho(c)$  and construct the following infinite block-diagonal lattice achieving the nonvanishing determinant property, provided that the base field $F$ is either $\mathbb{Q}$ or quadratic imaginary \cite{HL}:
 
\begin{align*}
	\mathcal{L}(\Or) = \left\{\left.\begin{bmatrix} \rho(c) & 0 & \cdots & 0 \\ 0 & \tau\left(\rho(c)\right) & & 0 \\ \vdots & & \ddots & \vdots \\ 0 & \cdots & 0 & \tau^{n-1}\left(\rho(c)\right) \end{bmatrix} \in \Mat(n_t,\C)\, \right|\, c \in \Or \right\}.
\end{align*} 

\begin{remark}
The \emph{code rate} \cite{HL} of a space--time code carved out from $\mathcal{L}(\Or)$ in (complex) symbols per channel use is 
\begin{align*}
  R = \begin{cases}
  		nn_r^2/nn_r=n_r &\mbox{if $F$ is quadratic imaginary,} \\ 
  		nn_r^2/2nn_r=n_r/2 &\mbox{if $F=\mathbb{Q}$.} 
	\end{cases} 
\end{align*}
We point out that $n_r$ is the maximum code rate that allows for avoiding accumulation points at the receiving end with $n_r$ receive antennas.
\end{remark}

In summary, in order to construct an algebraic space--time code, we first choose a central simple algebra over a suitable base field and then look for a dense lattice in it. This amounts to selecting an adequate order in the algebra. As motivated earlier, we will opt for natural orders as a compromise between simplicity and maximal coding gain. 

\section{Natural orders with minimal discriminant}
\label{sec:nat_orders}

As an illustration of the general algebraic setup, consider the tower of extensions depicted in Figure~\ref{fig:tower}. 
\begin{figure}[!h]
\centering
	\includegraphics[trim={4cm 18.5cm 13cm 4.5cm},clip,scale=.9]{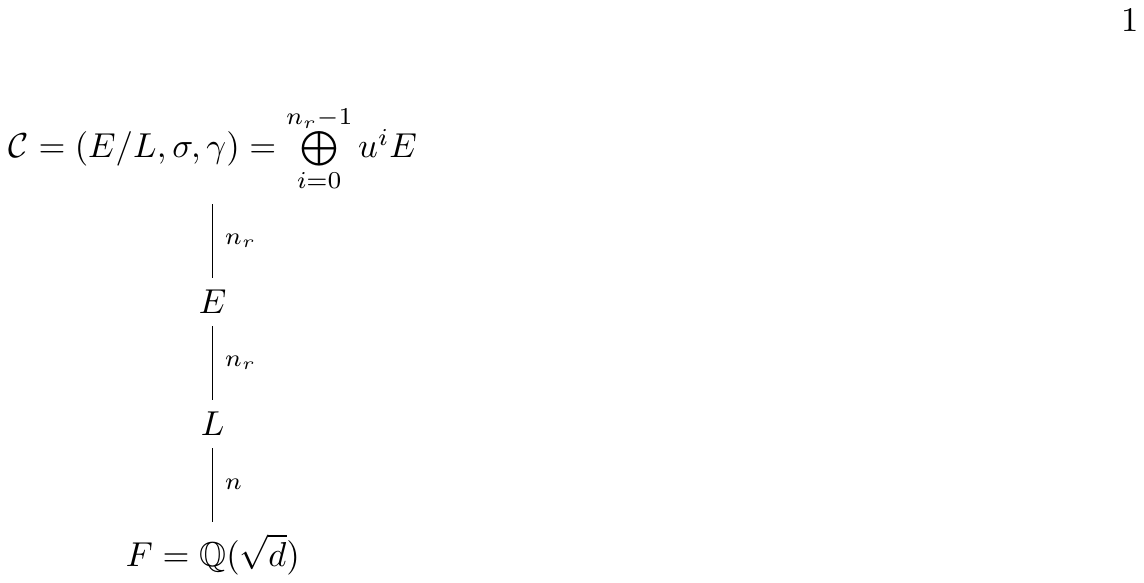}
\caption{Tower of Field Extensions.}
\label{fig:tower}
\end{figure}
In order to get the nonvanishing determinant property, we fix the base field $F \in \left\{\mathbb{Q},\mathbb{Q}(i)\right\}$, as well as the extension degrees $n = [L:F]$ and $n_r = [E:L]$. With these parameters fixed, our goal is to find an explicit field extension $E/L$ with $\G(E/L) = \langle \sigma \rangle$ and a non-norm element $\gamma \in \mcl\setminus\{0\}$ such that $E/F$ is a cyclic extension, 
$(E/L,\sigma,\gamma)$ is a cyclic division algebra,
and the absolute value $|\Nm_{\Or_F}(\disc(\Or_{\nat}/\mcf))|$ is the minimum possible among all cyclic division algebras satisfying the fixed conditions. 
Our constructions rely on some key properties of cyclic division algebras and their orders that we will next present as lemmata. 


\begin{lemma}{\cite[Lem.~5.4]{VHLR} and \cite[Prop.~5.3]{HL}}
\label{pro:disc}
Let $(E/L,\sigma,\gamma)$ be a cyclic division algebra of index $n_r$ and $\gamma \in \mcl\setminus\{0\}$ a non-norm element. We have  
\begin{align*}
	\disc(\Or_{\nat}/\mcl) = \disc(E/L)^{n_r}\cdot\gamma^{n_r(n_r-1)},
\end{align*} 
with $\disc(E/L)$ the $\Or_L$-discriminant of $\Or_E$. Hence, if $F \subset L$, then by the discriminant tower formula
\begin{align}
\label{eqn:disc}
\begin{split}
	\disc(\Or_{\nat}/\mcf) &=
\Nm_{L/F}(\disc(\Or_{\nat}/\mcl))\cdot \disc(L/F)^{n_r^2} \\
&= \disc(E/F)^{n_r}\cdot \Nm_{L/F}^{n_r(n_r-1)}(\gamma).
\end{split}
\end{align}
\end{lemma}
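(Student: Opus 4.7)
The plan is to evaluate $\disc(\Or_{\nat}/\mcl)$ directly from its definition on a convenient basis, and then derive the $\mcf$-form via the tower formula. Fix an $\mcl$-basis $\{\alpha_1,\dots,\alpha_{n_r}\}$ of $\mce$; since $\Or_{\nat}=\bigoplus_{i=0}^{n_r-1} u^i\mce$ is a free $\mcl$-module of rank $n_r^2$, the set $\mathcal{B}=\{u^i\alpha_j \mid 0 \le i \le n_r-1,\ 1\le j\le n_r\}$ is an $\mcl$-basis. By definition $\disc(\Or_{\nat}/\mcl)$ is generated by $\det[\tr(bb')]_{b,b'\in\mathcal{B}}$, with $\tr = \tr_{\mcc/L}$. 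Inspection of the maximal representation \eqref{eqn:lrr} shows that $\tr(c_0 + uc_1+\cdots+u^{n_r-1}c_{n_r-1})=\Tr_{E/L}(c_0)$, i.e.\ only the $u^0$-component matters.

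The key computation is the product $(u^i\alpha_j)(u^k\alpha_l)$. The rule $xu = u\sigma(x)$ iterates to $xu^k = u^k\sigma^k(x)$, so
\[
(u^i\alpha_j)(u^k\alpha_l) = u^{i+k}\sigma^k(\alpha_j)\alpha_l,
\]
where if $i+k\ge n_r$ one further rewrites $u^{i+k} = \gamma\cdot u^{i+k-n_r}$ by centrality of $u^{n_r}=\gamma$. Feeding this into $\tr$, the quantity $\tr((u^i\alpha_j)(u^k\alpha_l))$ vanishes unless $i+k\in\{0,n_r\}$, in which case it equals $\Tr_{E/L}(\alpha_j\alpha_l)$ or $\gamma\,\Tr_{E/L}(\sigma^k(\alpha_j)\alpha_l)$, respectively.

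Ordering $\mathcal{B}$ lexicographically by $(i,j)$ turns the Gram matrix $G$ into an $n_r\times n_r$ array of $n_r\times n_r$ blocks whose only nonzero members sit at position $(0,0)$ and on the strict antidiagonal $(i, n_r-i)$ for $1\le i\le n_r-1$. The $(0,0)$ block is $M=[\Tr_{E/L}(\alpha_j\alpha_l)]$, so $\det(M)$ generates $\disc(E/L)$. Each antidiagonal block equals $\gamma\cdot[\Tr_{E/L}(\sigma^{n_r-i}(\alpha_j)\alpha_l)]$; writing the conjugate matrix $A=[\sigma^s(\alpha_j)]_{s,j}$, one has $M = A^\top A$ while the $i$-th antidiagonal block factors as $\gamma(P^{n_r-i}A)^\top A$ for $P$ the cyclic row-permutation matrix, so its determinant equals $\pm\gamma^{n_r}\det(M)$. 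Multiplying the $(0,0)$-block determinant by the determinant of the remaining $(n_r-1)\times(n_r-1)$ block antidiagonal yields
\[
\det(G) = \pm\,\det(M)^{n_r}\,\gamma^{n_r(n_r-1)},
\]
which gives the first equality on the level of ideals. I expect this cyclic-shift identification of the antidiagonal block determinants to be the main step requiring care with signs.

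For the second equality, apply the standard tower formula for discriminants of a free extension of Dedekind domains, namely $\disc(\Or_{\nat}/\mcf) = \Nm_{L/F}(\disc(\Or_{\nat}/\mcl))\cdot\disc(L/F)^{n_r^2}$, substitute the first equality, and use multiplicativity of $\Nm_{L/F}$ on powers and products. The factor $\Nm_{L/F}(\disc(E/L))^{n_r}\cdot\disc(L/F)^{n_r^2}$ then collapses to $\disc(E/F)^{n_r}$ by raising the analogous field-discriminant tower $\disc(E/F) = \Nm_{L/F}(\disc(E/L))\cdot\disc(L/F)^{n_r}$ to the $n_r$-th power, giving exactly the claimed expression $\disc(E/F)^{n_r}\cdot \Nm_{L/F}(\gamma)^{n_r(n_r-1)}$.
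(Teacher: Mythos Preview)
The paper does not prove this lemma; it is quoted from \cite{VHLR} and \cite{HL}. Your direct block-Gram-matrix computation is correct and is essentially the standard argument one finds in those references: the reduced trace kills all $u^m$-components with $m\not\equiv 0$, so the Gram matrix for the basis $\{u^i\alpha_j\}$ has exactly the $(0,0)$ block $M=[\Tr_{E/L}(\alpha_j\alpha_l)]$ and the antidiagonal blocks $\gamma\cdot(P^{n_r-i}A)^\top A$, each of determinant $\pm\gamma^{n_r}\det(M)$; multiplying gives $\pm\det(M)^{n_r}\gamma^{n_r(n_r-1)}$, hence the ideal equality. The tower step is likewise routine and exactly as you describe.

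Two small remarks. First, your opening line ``Fix an $\mcl$-basis of $\mce$'' tacitly assumes $\mce$ is $\mcl$-free; this matches the paper's standing convention (see the Remark preceding the lemma, which defines discriminants via a basis), but for a self-contained write-up you might note that the general ideal statement follows by localisation. Second, the sign bookkeeping you flag is indeed immaterial since the discriminant is an ideal; you need only observe that $\det(P)=\pm1$ and that the block-antidiagonal permutation contributes a global sign.
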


\begin{lemma} \cite[Thm.~2.4.26]{Veh}
\label{pro:bound}
Let $L$ be a number field and $(\mathfrak{p}_1, \mathfrak{p}_2)$ a pair of norm-wise smallest prime ideals in $\mcl$. If we do not allow ramification on infinite primes, then the smallest possible discriminant of all central division algebras over $L$ of index $n_r$ is 
\begin{equation}
\label{eqn:bound}
	(\mathfrak{p}_1\mathfrak{p}_2)^{n_r(n_r-1)}.
\end{equation}
\end{lemma}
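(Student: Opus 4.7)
The plan is to pass to the local classification of central simple $L$-algebras via the Albert--Brauer--Hasse--Noether theorem, exploit the resulting local formula for the discriminant of a maximal order, and then minimise under the global reciprocity constraint.

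I would begin by recalling that $\mcc$ is determined up to Brauer equivalence by its local Hasse invariants $\mathrm{inv}_\mathfrak{p}(\mcc)\in\Q/\Z$, subject to the reciprocity law $\sum_\mathfrak{p}\mathrm{inv}_\mathfrak{p}(\mcc)\equiv 0 \pmod{\Z}$ summed over all places of $L$. The local Schur index $m_\mathfrak{p}$ is the additive order of $\mathrm{inv}_\mathfrak{p}(\mcc)$, and $\mcc$ being division of index $n_r$ translates into $\mathrm{lcm}_\mathfrak{p}\, m_\mathfrak{p}=n_r$. The first computational input is the local discriminant: from $\mcc\otimes_L L_\mathfrak{p}\cong\Mat_{n_r/m_\mathfrak{p}}(D_\mathfrak{p})$, with $D_\mathfrak{p}$ the local division algebra of degree $m_\mathfrak{p}$, a classical calculation (cf.\ Reiner, \emph{Maximal Orders}, Ch.~25) shows that a maximal $\mcl_\mathfrak{p}$-order in the local algebra has discriminant $\mathfrak{p}^{n_r(m_\mathfrak{p}-1)}$. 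Globally, the discriminant of any maximal $\mcl$-order in $\mcc$ therefore factorises as
\[
	d_\mcc \;=\; \prod_{\mathfrak{p}\text{ finite}}\mathfrak{p}^{n_r(m_\mathfrak{p}-1)},
\]
only ramified primes contributing and the restriction to finite places reflecting the no-infinite-ramification hypothesis.

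Next I would combine the global constraints. Reciprocity prevents ramification at a single finite prime, so at least two finite primes must appear. For a two-prime ramification pattern $\{\mathfrak{p}_1,\mathfrak{p}_2\}$ with invariants $h_1,h_2$, the relation $h_1+h_2\equiv 0\pmod{\Z}$ forces $m_{\mathfrak{p}_1}=m_{\mathfrak{p}_2}$, and the condition $\mathrm{lcm}(m_{\mathfrak{p}_1},m_{\mathfrak{p}_2})=n_r$ then pins this common value to $n_r$. Taking $\mathfrak{p}_1,\mathfrak{p}_2$ to be a norm-wise smallest pair of prime ideals in $\mcl$ and choosing $h_1=k/n_r$, $h_2=-k/n_r$ with $\gcd(k,n_r)=1$, the surjectivity of the local--global invariant map supplies a cyclic division algebra realising exactly these data, and its discriminant is $(\mathfrak{p}_1\mathfrak{p}_2)^{n_r(n_r-1)}$.

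The main obstacle is to rule out that a ramification pattern with three or more primes produces a strictly smaller $\Nm_L(d_\mcc)$, which is particularly delicate when $n_r$ is composite. My approach here would be to establish, for every admissible tuple $(m_i)_{i=1}^k$ with $k\geq 3$, $m_i\geq 2$, $\mathrm{lcm}(m_i)=n_r$, and realisable invariants summing to zero in $\Q/\Z$, the inequality $\sum_i (m_i-1)\log\Nm_L(\mathfrak{q}_i) \geq 2(n_r-1)\log\Nm_L(\mathfrak{p}_2)$ whenever the $\mathfrak{q}_i$ range over the norm-wise smallest primes. The key arithmetic input is the scarcity of small-norm primes in $\mcl$, which forces each additional ramified prime to carry norm at least $\Nm_L(\mathfrak{p}_2)$, thereby dominating any combinatorial savings in the exponents $n_r(m_i-1)$ and securing the claimed minimum.
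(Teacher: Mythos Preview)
The paper does not prove this lemma; it is quoted without argument from \cite{Veh}. Your local--global setup is the standard one and your first two paragraphs are correct: the formula $d_\mcc=\prod_{\mathfrak p}\mathfrak p^{\,n_r(m_{\mathfrak p}-1)}$ holds for any maximal order, reciprocity rules out a single finite ramified prime (infinite ones being excluded by hypothesis), and the two-prime construction with invariants $\pm k/n_r$ at $\mathfrak p_1,\mathfrak p_2$ realises exactly $(\mathfrak p_1\mathfrak p_2)^{n_r(n_r-1)}$.

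The genuine gap is in your third paragraph: the inequality you propose there is false for composite $n_r$, and in fact the lemma as literally stated fails in that generality. Take $L=\Q$ and $n_r=6$. The division algebra with Hasse invariants $\tfrac16$ at $2$, $\tfrac13$ at $3$, $\tfrac12$ at $5$ (summing to $1\equiv 0$, with $\mathrm{lcm}$ of the local indices equal to $6$) has maximal-order discriminant $2^{30}\cdot 3^{12}\cdot 5^{6}$, and this is strictly smaller than $(2\cdot 3)^{6\cdot 5}=2^{30}\cdot 3^{30}$ because $5^{6}<3^{18}$. So a three-prime pattern with mixed local indices beats the two-prime bound, and no ``scarcity of small-norm primes'' argument can repair this. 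What \emph{is} true---and what suffices for every application in the paper, where $n_r\in\{2,3\}$---is the case of prime $n_r$: then every nontrivial local index is forced to equal $n_r$, so $d_\mcc=\bigl(\prod_i\mathfrak q_i\bigr)^{n_r(n_r-1)}$ over the ramification set, and minimising the norm picks out exactly the two smallest primes. Under the additional hypothesis that $n_r$ is prime, your argument is complete after the second paragraph and the third can be discarded.
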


We have arrived at the following \emph{optimization problem:} in order to minimize the discriminant $\disc(\Or_{\nat}/\mcf)$ of a natural order of an index-$n_r$ L-central division algebra over a fixed base field $F\subset L$, we must jointly minimize the relative discriminant of the extension $E/F$ and the relative norm of the non-norm element $\gamma$. 

Our findings are summarized in the following table and will be proved, row by row, in the subsequent five theorems. Here, $\alpha$ is a root of the polynomial $X^2+X-i$ and $\beta$ denotes a root of the polynomial $X^3-(1+i)X^2+5iX-(1+4i)$.
 
\begin{table}[H]
\scalebox{0.94}{
\begin{tabular}{lllllllll}
	$F$ & $n$ & $n_r$ & $n_t$ & rate & $\Nm_{\mcf}(\disc(\Or_{\nat}/\mcf))$ & $L$ & $E$ & $\gamma$ \\
	\hline
 	$\Q$ & 1 & 2 & 2 & 1 & $2^2\cdot 3^2$ & $\Q$ & $\Q(i\sqrt{3})$ & 2 \\
 	$\Q$ & 2 & 2 & 4 & 1 & $2^4\cdot 5^6$ & $\Q(\sqrt{5})$ & $\Q(\zeta_5)$ & -4 \\
 	$\Q(i)$ & 2 & 2 & 4 & 2 & $2^4\cdot 17^3 $ & $\Q(i,\alpha)$ & $\Q(i, \sqrt{\alpha} )$ &$1+ i$ \\
	$\Q(i)$ & 2 & 3 & 6 & 3 & $3^{18}\cdot 13^{12}$ & $\Q(i, i\sqrt{3})$ & $\Q(i, i\sqrt{3},\beta)$ & $\frac{1+i\sqrt{3}}{2}$ \\
	$\Q(i)$ & 3 & 2 & 6 & 2 & $2^6\cdot 3^{12}\cdot 13^8$ & $\Q(i, \beta)$ & $\Q(i, i\sqrt{3},\beta)$ & $1+i$
\end{tabular}}  
\caption{Main results summarized.}
\label{tab:results}
\end{table}

\begin{remark} It is often preferred that $|\gamma|=1$ for balanced transmission power. However, there are good `remedy' techniques for the case when $|\gamma|>1$, see \emph{e.g.}, \cite{ESK,VHO}. 
\end{remark}

\subsection{General Strategy}

We briefly elaborate on the three-step strategy that we will follow to prove each of the theorems. Let $F$, $n$ and $n_r$ be fixed.

\paragraph{\underline{Step 1.}}
We start by finding an explicit cyclic extension $E/L$ of degree $n_r$, $F\subset L$, such that $[L:F] = n$ and $E/F$ is cyclic with $|\Nm_{\Or_F}(\disc(E/F))|$ smallest possible. In the cases where $F = \mathbb{Q}$, our extension $E/F$ will either be quadratic or quartic cyclic, and we simply use well-known formulas for computing $\disc(E/\Q)$. 
For $F = \Q(i)$ we resort to the following results from Class Field Theory (all the details can be found in \cite{CS}).  

Let $F\subset E $ be an abelian extension of number fields. For each prime $\mathfrak p$ of $F$ that is unramified in $E$ there is a unique element $\frob_{\mathfrak p}\in \G(E/F)$ that induces the Frobenius automorphism $x\mapsto x^{\sharp k_{\mathfrak p}}$ on the residue field extensions $k_{\mathfrak p}\subset k_{\mathfrak q}$ for the primes $\mathfrak q $ in $E$ extending $\mathfrak p$. The order of $\frob_{\mathfrak p}$ in $\G(E/F)$ equals the residue class degree $[k_{\mathfrak q}:k_{\mathfrak p}]$, and the subgroup $\langle\frob_{\mathfrak p}\rangle$ of $\G(E/F)$ is the decomposition group of $\mathfrak p$. The \textit{Artin map} for $E/F$ is the homomorphism 
\begin{align*}
	\psi_{E/F}: I_F(\disc(E/F))&\longrightarrow \G(E/F)\\
	\mathfrak p &\longmapsto \frob_{\mathfrak p}
\end{align*}
on the group $I_F(\disc(E/F))$ of fractional $\Or_F$- ideals generated by the prime ideals $\mathfrak p$ of $F$ that do not divide the discriminant $\disc(E/F)$. These are unramified in $E$. For an ideal $\mathfrak a$ in $I_{F}(\disc(E/F))$ we call $\psi_{E/F}(\mathfrak a)$ the \textit{Artin symbol} of $\mathfrak a$ in $\G(E/F)$.

The \textit{Artin Reciprocity Law} states that if $F\subset E$ is an abelian extension, then there exists a nonzero ideal $\mathfrak m_0\Or_L$ such that the kernel of the Artin map $\psi_{E/L}$ contains all principal $\Or_L$-ideals $x\Or_F$ with $x$ totally positive and $x\equiv 1\zmod {\mathfrak m_0}$. 
We define a \emph{modulus} of $L$ to be a formal product $\mathfrak m = \mathfrak m_0 \mathfrak m_\infty$, where $\mathfrak m_0$ is a nonzero $\Or_E$-ideal and $\mathfrak m_\infty$ is a subset of the real primes of $L$. We write $x\equiv 1 \zmod{^{\times}\mathfrak m}$ if $\hbox{ord}_{\mathfrak p}(x-1)\ge\hbox{ord}_{\mathfrak p}(\mathfrak m_0)$ at the primes $\mathfrak p$ dividing the finite part $\mathfrak m_0$ and $x$ is positive at the real primes in the infinite part $\mathfrak m_{\infty}$. 
In the language of moduli, Artin's Reciprocity Law asserts that there exists a modulus $\mathfrak m$ such that the kernel of the Artin map contains the \textit{ray group} $R_{\mathfrak m}$ of principal $\Or_L$-ideals $x\Or_L$ generated by elements $x\equiv 1 \zmod{^{\times}\mathfrak m}$. The set of these \textit{admissible} moduli for $L/E$ consists of the multiples of some minimal modulus $\mathfrak f_{E/L}$, the \textit{conductor} of $L/E$. The primes occurring in $\mathfrak f_{E/L}$ are the primes of $L$, both finite and infinite, that ramify in $E$. 
 
 If $\mathfrak m=\mathfrak m_0\mathfrak m_{\infty}$ is an admissible modulus for $L/E$, and $I_{\mathfrak m}$ denotes the group of fractional $\Or_L$-ideals generated by the primes $\mathfrak p$ coprime to $\mathfrak m_0$, then the Artin map induces a surjective homomorphism 
\begin{align}
\label{eqn:artin}
\begin{split}
	\psi_{E/L}: \Cl_{\mathfrak m} = I_{\mathfrak m}/R_{\mathfrak m} &\longrightarrow \G(E/L),\\
	[\mathfrak p] &\longmapsto \frob_{\mathfrak p},
\end{split} 
\end{align}
where $\Cl_{\mathfrak m}$ is the \emph{ray class group}, and $\ker(\psi_{E/F}) = A_{\mathfrak m}/R_{\mathfrak m}$ with
\begin{equation} 
\label{eqn:norm}
	A_{\mathfrak m} = \Nm_{E/F}(I_{\mathfrak m\Or_F})\cdot R_{\mathfrak m}.
\end{equation}

The \textit{existence theorem} from Class Field Theory states that for every modulus $\mathfrak m$ of $F$, there exists an extension $F\subset H_{\mathfrak m}$ for which the map in \eqref{eqn:artin} is an isomorphism. Inside some fixed algebraic closure of $F$, the \textit{ray class field} $H_{\mathfrak m}$ is uniquely determined as the maximal abelian extension  of $F$ in which all the primes in the ray group $R_{\mathfrak m}$ split completely. Conversely, if $F\subset E$ is abelian then $E \subset H_{\mathfrak m}$ whenever $\mathfrak m$ is an admissible modulus for $F\subset E$. For $E = H_{\mathfrak m}$, we have $A_{\mathfrak m} = R_{\mathfrak m}$ in \eqref{eqn:norm} and an \textit{Artin isomorphism} 
\begin{equation}
\label{eqn:artiniso}
	\Cl_{\mathfrak m} \simeq \G(H_{\mathfrak m}/F).
\end{equation}
	
For all $\mathfrak m$, the ray group $R_{\mathfrak m}$ is contained in the subgroup $P_{\mathfrak m} \subset I_{\mathfrak m}$ of principal ideals in $I_{\mathfrak m}$, with $I_{\mathfrak m}/P_{\mathfrak m} = \Cl_{F}$, the class group of $F$. There is a natural exact sequence
\begin{equation}
\label{eqn:exact}
	\Or_{F}^{\times} \longrightarrow (\Or_F/\mathfrak m)^{\times} \longrightarrow \Cl_{\mathfrak m} \longrightarrow \Cl_{F}\longrightarrow 0,
\end{equation}
and the residue class in $\Cl_{\mathfrak m}$ of $x \in \Or_L$ coprime to $\mathfrak m_0$ in the finite group $(\Or_F/\mathfrak m)^{\times} = (\Or_F/\mathfrak m)^{\times}\times\prod_{\mathfrak p\mid \mathfrak m_{\infty}}\langle -1 \rangle $ consists of its ordinary residue class modulo $\mathfrak m_0$ and the signs of its images under the real primes $\mathfrak p \mid \mathfrak m_{\infty}$.
	
Finally, we compute the discriminant $\disc(E/F)$ using Hasse's conductor-discriminant formula
\begin{equation}
\label{eqn:condisc}
	\disc(E/F)=\prod_{\chi:I_{\mathfrak m}/A_{\mathfrak m}\to\C^{\times}}\mathfrak f(\chi)_0,
\end{equation}
where $\chi$ ranges over the characters of the finite group $I_{\mathfrak m}/A_{\mathfrak m}\simeq \G(E/F)$ and $\mathfrak f(\chi)_0$ denotes the finite part of the conductor $\mathfrak f(\chi)$ of the ideal group $A_{\chi}$ modulo $\mathfrak m$ satisfying $A_{\chi}/A_{\mathfrak m}=\ker \chi$.

\paragraph{\underline{Step 2.}}
Unfortunately, having $|\Nm_{\Or_F}(\disc(E/F))|$ smallest possible is not sufficient for $|\Nm_{\Or_F}(\disc(\Or_{\nat}/\mcf))|$ to be smallest possible as well. 
Using \eqref{eqn:disc} and \eqref{eqn:bound} we can derive a positive lower bound on the size of a non-norm element $\gamma \in \Or_L$ as  
\begin{equation}
\label{eqn:constant}
	\left|\Nm_{\Or_L}(\gamma ^{n_r-1})\right|\ge \frac{\left|\Nm_{\Or_L}(\mathfrak{p}_1\mathfrak{p}_2)^{n_r-1}\right|}{\left|\Nm_{\Or_L}(\disc(E/L))\right|} =: \lambda_{E,L} \in \mathbb{N},
\end{equation}
where $(\mathfrak{p}_1, \mathfrak{p}_2)$ is a pair of norm-wise smallest prime ideals in $\mcl$. 
If $\disc(E/L)$ is minimal and $\lambda_{E,L} > 1$, as will be the case in Theorems~\ref{thm:res1} and \ref{thm:res2} below, we need to balance the size of $\disc(E/F)$ and $\gamma$ in order to achieve minimality of $|\Nm_{\Or_F}(\disc(\Or_{\nat}/\mcf))|$. 

In the last three theorems, we will then proceed as follows. Given two cyclic algebras $(E/L, \sigma, \gamma)$ and $(E'/L',\sigma', \gamma')$ of index $n_r$, where $\langle \sigma \rangle = \G(E/L)$, $\langle \sigma' \rangle = \G(E'/L')$ and such that $\Q(i)\subset L, L'$, we have by \eqref{eqn:disc}, since norms in $\Q(i)$ are positive, 
\begin{align}
\label{eqn:discsize}
\begin{split}
	\Nm_{\Z[i]}(\disc(\Or_{\nat}/\Z[i])) &\le \Nm_{\Z[i]}(\disc(\Or'_{\nat}/\Z[i])) \\
	&\Leftrightarrow \\
	D_{E/L}(\gamma) &\le D_{E'/L'}(\gamma'),  
\end{split}
\end{align}
with  
\begin{equation}
\label{eqn:balance}
	D_{E/L}(\gamma) := \Nm_{\Z[i]}(\disc(E/\Q(i)))\cdot \Nm_{\Or_L}(\gamma)^{n_r-1}.
\end{equation} 

Our strategy will be to fix a non-norm element $\gamma \in \Or_L$ of smallest possible norm, compute  $D_{E/L}(\gamma)$ and, along the lines of Step 1, compare $D_{E/L}(\gamma)$ with $D_{E'/L'}(\gamma')$, where $E'/L'$ runs over all degree-$n_r$ cyclic field extensions such that $\Q(i)\subset L'$ and $\gamma' \in \Or_L$ is a non-norm element for $E'/L'$ of smallest possible norm.

\paragraph{\underline{Step 3.}}
If $\disc (E/L)$ is smallest possible and $\lambda_{E,L} < 1$, as will be the case in Theorems \ref{thm:res3}, \ref{thm:res4} and \ref{thm:res5}, the optimal situation would be to be able to choose a non-norm element $\gamma$ which is a unit, $\gamma \in \Or_L^{\times}$. Hasse's Norm Theorem
will help us decide whether such an element exists and, if it does, how to find it. We use the following strategy from the theory of local fields to compute $\Nm_{K/k}(K^{\times})$ when $K/k$ is an extension of non-Archimedean local fields (all the details can be found in \cite[Chp.~7]{JM}).

Let $k$ be a field, complete under a discrete valuation $v_k$. Let $A_k$ be its valuation ring with maximal ideal $\mathfrak m_k$, generated by a local uniformizing parameter $\pi_k$ with $v_k(\pi_k)=1$, and $\overline{k} = A_k/\mathfrak m_k$ the residue field, where $|\overline k| = q_k$ is a power of a rational prime $p$. Denote by $U_k = A_k-\mathfrak m_k$ the multiplicative group of invertible elements $A_k^{\times}$ of $A_k$, and set $U_k^i = 1+\mathfrak m_k^i$, $i\ge 1$.
Then, $U_k = S_k\times U_k^1$, where $S_{k}$ is a complete set of representatives of $\overline k$, and  
\begin{align*}
	k^{\times} = \langle \pi_k\rangle U_k = S_k \times \langle \pi_k \rangle \times U_k^1.
\end{align*}  
As $\overline k^{\times}$ is cyclic of order $q_k-1$, we may take $S_k =\left\{0\right\} \cup \left\{\zeta_{q_k-1}^i \mid 1 \le i \le q_k-1\right\} $, where $ \zeta_n$ denotes a primitive $n$-th root of unity in $\overline k$.
 
Let $K$ be finite separable extension of $k$, $A_K$ the integral closure of $A_k$ in $K$, and let $v_K$, $\mathfrak m_K$, $\pi_K$, $\overline K$, $q_K$ and $U_K^i$ be defined as above. As usual, we denote the ramification index and residue degree of $\mathfrak m_K$ in $K/k$ by $e_{K/k}$ and $f_{K/k}$, respectively. We have $e_{K/k}\cdot f_{K/k} = [K:k]$ and $\pi_k = \pi_K^{e_{K/k}}\times u$ with $u$ a unit, so that $\Nm_{K/k}(\pi_K) = \pi_k^{f_{K/k}}$.

The group $\Nm_{K/k} (U_K)$ is a subgroup of $U_k$ with $[U_k: \Nm_{K/k}(U_K)]=e_{K/k} $, and $\Nm_{K/k}(U_K^1) \subset U_k^1$ with $[U_k^1: \Nm_{K/k}(U_K^1)]$ a power of $p$.
Consequently, if the extension is unramified, i.e. $e_{K/k}=1$, then  $\Nm_{K/k}(U_K)=U_k$ and every unit is a norm. 

Suppose hereinafter that $K/k$ is a totally tamely ramified extension, thus $f_{K/k} = 1$,  $(p, e_{K/k}) = 1$, $\overline K = \overline k$, $q_K = q_k$ and the \emph{local conductor}, \emph{i.e.}, the smallest integer $f$ such that $U_ k^f \subset \Nm_{K/k}(K^{\times})$, is 1 (\cite[Aside 1.9]{JM2}). Then $\Nm_{K/k}(\zeta_{q_K-1})=\zeta_{q_k-1}^{[K:k]}$ and $\Nm_{K/k}(U_K^1)=U_k^1$. 
Consequently, 
\begin{align*}
	\Nm_{K/k}(K^{\times}) = \langle \Nm_{K/k}(\pi_K), \zeta_{q_k-1}^{[K:k]}\rangle U_k^1.
\end{align*}
Since $\pi_K$ is not a unit, we conclude that in the totally tamely ramified case,
\begin{equation}
\label{eqn:nonnorm}
	U_k \cap \Nm_{K/k}(K^{\times})= \langle \zeta_{q_k-1}^{[K:k]}\rangle U_k^1.
\end{equation}
	
With the above information at hand, we go back to the extension $E/L$ found in Step 1 with $\lambda_{E,L} < 1$. In order to produce a suitable unit $\gamma \in \Or_L^{\times}$ which is not a local norm at some prime ramifying in the extension, we compute the ramified primes as well as $\langle \zeta_{q_k-1}^{[K:k]}\rangle U_k^1$ in the corresponding local extension $K/k$. Considering $\Or_L$ as a subset of $A_k$ and using Hensel's lemma, we look for a unit in $\Or_L^{\times}$ such that its image in $\overline{k}$ lies  outside $\langle \zeta_{q_k-1}^{[K:k]}\rangle$.
Unfortunately, if $\Or_L^{\times} \subset \langle \zeta_{q_k-1}^{[K:k]}\rangle U_k^1$, as will be the case in Theorems~\ref{thm:res3} and \ref{thm:res5}, a non-norm unit for $E/L$ will not exist. In those cases the sizes of $\disc(E/F)$ and $\gamma$ in \eqref{eqn:disc} must be balanced using \eqref{eqn:balance} in order to achieve minimality of $|\Nm_{\Or_F}(\disc(\Or_{\nat}/\mcf))|$.  

\subsection{Main Results}    

We are now ready to state and prove the main results of this article. 
    
\begin{theorem} 
\label{thm:res1}
Let $ \Q \subset E = \Q(\sqrt{d})$, $d \in \Z$ square-free, and $\G(E/L)=\langle \sigma \rangle$. Any cyclic division algebra $\left(E/\Q, \sigma, \gamma\right)$ of index 2 satisfies 
$|\disc(\Or_{\nat}/\Z)| \ge 36$, and equality is achieved for $E = \Q(i\sqrt{3})$, $\gamma = 2$.
\end{theorem}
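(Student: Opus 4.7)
The plan is to combine the discriminant formula of Lemma~\ref{pro:disc} with the algebra-level lower bound of Lemma~\ref{pro:bound}, and then to exhibit the claimed extremal example. With $L=F=\Q$ and $n_r=2$, Lemma~\ref{pro:disc} specialises to
\[
|\disc(\Or_{\nat}/\Z)| \;=\; \disc(E/\Q)^{2}\,\gamma^{2},
\]
since $\Nm_{\Q/\Q}$ is the identity and $n_r(n_r-1)=2$. So minimising the left-hand side is a joint optimisation problem: one must balance a small field discriminant against a small non-norm element $\gamma\in\Z\setminus\{0\}$.

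For the lower bound I invoke Lemma~\ref{pro:bound} directly. The two norm-smallest prime ideals of $\Z$ are $(2)$ and $(3)$, so any $\Q$-central division algebra of index~$2$ has maximal-order discriminant of absolute norm at least $(2\cdot 3)^{n_r(n_r-1)}=36$. Since $\Or_{\nat}$ is contained in some maximal order $\Or_{\max}$ and $\disc(\Or_{\max}/\Z)$ divides $\disc(\Or_{\nat}/\Z)$, this immediately yields $|\disc(\Or_{\nat}/\Z)|\ge 36$.

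To realise equality I take $E=\Q(i\sqrt{3})$, for which $\disc(E/\Q)=-3$ (as $-3\equiv 1\pmod{4}$), together with $\gamma=2$. The remaining task is to verify that $\gamma=2$ is a non-norm for $E/\Q$, which via criterion~\eqref{eqn:non_norm} guarantees that the algebra is division. Since $\Nm_{E/\Q}(a+b\sqrt{-3})=a^{2}+3b^{2}$, this reduces to showing that $\alpha^{2}+3\beta^{2}=2m^{2}$ admits no primitive integer solution; reducing modulo $3$ and using that $2$ is a non-residue settles this at once (equivalently, one checks that $2$ fails to be a local norm at the ramified prime $(3)$ and invokes Hasse's Norm Theorem). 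Plugging in yields $|\disc(\Or_{\nat}/\Z)|=9\cdot 4=36$.

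The argument is short because both heavy-lifting lemmas are already in place; the only step where care is required is the non-norm verification, and even there a single congruence check (or equivalently a Legendre-symbol computation at $(3)$) suffices. No delicate case analysis over other small quadratic fields is needed, as Lemma~\ref{pro:bound} produces the uniform lower bound in one stroke.
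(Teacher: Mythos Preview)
Your argument is correct and takes a genuinely shorter route to the lower bound than the paper does. The paper first singles out $E=\Q(i\sqrt{3})$, uses the constant $\lambda_{E,\Q}$ from \eqref{eqn:constant} (itself derived from Lemma~\ref{pro:bound}) to force $|\gamma|\ge 2$ for that particular field, and then runs a case analysis over all remaining quadratic fields $\Q(\sqrt{d})$---splitting according to $d\bmod 4$ and treating $d=5$ separately---to verify that every competitor yields a strictly larger natural-order discriminant. You bypass this entirely by applying Lemma~\ref{pro:bound} once to bound the algebra discriminant $d_{\mcc}$ and then invoking the divisibility $d_{\mcc}\mid\disc(\Or_{\nat}/\Z)$ from the Remark preceding Lemma~\ref{pro:disc}; this produces the uniform bound $36$ in one stroke, independently of the choice of $E$ and $\gamma$. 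Your treatment of the equality case (verifying $2\notin\Nm_{E/\Q}(E^{\times})$ via reduction modulo $3$) is essentially the paper's argument in condensed form.

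One point is worth flagging: Lemma~\ref{pro:bound} carries the hypothesis ``no ramification on infinite primes,'' and your one-line lower bound rests on it squarely. The paper works under the same tacit assumption (its use of $\lambda_{E,\Q}$ for $d=-3$ and $d=5$ inherits exactly this restriction), so your shortcut is legitimate precisely within the framework the paper adopts. But you should be aware that without that hypothesis the bound actually fails---for instance $(\Q(i\sqrt{3})/\Q,\sigma,-1)$ is division with $|\disc(\Or_{\nat}/\Z)|=9$---so the sentence ``any $\Q$-central division algebra of index~$2$ has maximal-order discriminant \ldots\ at least $36$'' is true only after excluding infinite ramification.
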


\begin{proof}: The proof follows the strategy described above. 
\medskip

\noindent\emph{\underline{Step 1.}} 
The smallest possible quadratic discriminant over $\Q$ is $\disc(E/\mathbb{Q}) = 3$, corresponding to the field $E = \Q(i\sqrt{3}) = \Q(\omega)$,  $\omega$ a primitive cubic root of unity. Let $\G(E/\Q) = \langle \sigma \rangle$. 
\medskip

\noindent\emph{\underline{Step 2.}}  
A pair of smallest primes in $\Z$ is $(2,3)$, so that $\lambda_{E, \Q} = \frac{6}{3} > 1$  (cf. \eqref{eqn:constant}). Thus, any non-norm element $\gamma \in \Z$ satisfies $|\gamma | \ge 2$. 

To ensure that we can choose $\gamma = 2$, we first show that the equation $x^2+3y^2=2$ has no solution in $\Q$. Consequently, $2\not \in \Nm_{E/\Q}(E^{\times})$ and $(\Q(i\sqrt{3})/\Q, \sigma, 2)$ is a division algebra with  $\disc(\Or_{\nat}/\Z) = 36$.

Suppose that $\left(\frac{a}{b}\right)^2+3 \left(\frac{c}{d}\right )^2 = 2$, with  $a, b, c, d\in \Z$ and such that $(a,b) = (c,d) = 1$. Then,
\begin{equation}
\label{eqn:norm2}
	(ad)^2+3(bc)^2=2(bd)^2. 
\end{equation}
It is easy to deduce from \eqref{eqn:norm2} that $3^s, s \ge 0$, is the largest power of 3 dividing $b$ if and only if it is the largest power of $3$ dividing $d$. If $(3,bd) = 1$, then equation \eqref{eqn:norm2} has no solution in $\Z$, as 2 is not a square $\mathrm{mod}$ 3. Set $b = 3^sb'$ and $d = 3^sd'$, with $(3,b'd') = 1$, $s\ge 1$. Substituting into \eqref{eqn:norm2} yields
\begin{align*}
	(ad')^2+3(b'c)^2=2.3^s(b'd')^2,
\end{align*}
which is absurd, since $(3,ad') = 1$.

Next, we use \eqref{eqn:disc} to see that for $E' = \Q(\sqrt d)$ with $d \ne 1, -3$ a square-free integer and $\G(E'/\Q) = \langle \sigma '\rangle$, the cyclic division algebra $\left(E'/\Q, \sigma', \gamma'\right)$ satisfies $ |\disc(\Or_{nat}'/\Z)|>36$ for any choice of non-norm element $\gamma\in \Z$.
\begin{itemize}
	\item[i)] If $d\equiv 2,3 \zmod 4$, then $|\disc(E'/\Q)|=4|d| \ge 8$,
	and \eqref{eqn:disc} guarantees that  for any  $\gamma' \in \Z$, $|\disc (\Or_{nat}'/\Z)|\ge 8^2\gamma'^2\ge 64$.

	\item[ii)] If $d\equiv 1 \zmod 4$ and $|d|\ge 7$, then $\left|\disc(E'/\Q)\right| = |d|$ and \eqref{eqn:disc} implies $|\disc (\Or_{nat}'/\Z)|\ge 7^2\gamma'^2\ge 49$. 

	\item[iii)] For $d = 5$,  we have $\disc(E'/\Q) = 5$ and $\lambda_{E', \Q} = \frac{6}{5} = 2$ (cf. \eqref{eqn:constant}). Using \eqref{eqn:disc} we conclude that for any non-norm element $\gamma' \in \Z$, $\disc(\Or_{nat}'/\Z)\ge 5^2\cdot 2^2>36$.
\end{itemize} \qed
\end{proof}

\begin{theorem}
\label{thm:res2}
Let $\Q \subset L \subset E$ with $[E:\Q] = 4$, $[E:L] = 2$ and $\G(E/L)=\langle \sigma \rangle$. If $\left( E/L, \sigma, \gamma\right)$ is a cyclic division algebra, then $\Nm_\Z(\disc(\Or_{\nat}/\Z)) \ge 2^4\cdot 5^6$. Equality is achieved for $L = \Q(\sqrt{5}),$ $E = \Q(\zeta_5)$ and $\gamma = -4$, with $\zeta_5$ a primitive $5^\text{th}$ root of unity. 

\end{theorem}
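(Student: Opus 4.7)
The plan is to follow the three-step strategy of Section~\ref{sec:nat_orders}. As we shall see, $\lambda_{E,L}>1$ and no balancing is required, so only Steps~1 and~2 are relevant here.

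\textbf{Step~1: smallest cyclic quartic discriminant over $\Q$.} By the conductor--discriminant formula applied to $\G(E/\Q)\cong\Z/4\Z$,
\[
|\disc(E/\Q)| = f_L\cdot f_E^{2},
\]
where $L\subset E$ is the unique quadratic subfield (of conductor $f_L$) and $f_E$ is the conductor of $E$. Two structural remarks drive the analysis. First, $L$ is necessarily \emph{real}: complex conjugation is the only element of order $\le 2$ in $\Z/4\Z$, so it must fix $L$. Second, the Dirichlet character cutting out $E$ must be an order-$4$ lift of the Kronecker character of $L$, forcing $f_L\mid f_E$ along with non-trivial compatibility constraints on $f_E$. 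I enumerate the real quadratic fields by increasing conductor. For $L=\Q(\sqrt 5)$ ($f_L=5$), $(\Z/5)^\times\cong\Z/4\Z$ already carries an order-$4$ character with square $\chi_5$, yielding $f_E=5$, $E=\Q(\zeta_5)$ and $|\disc(E/\Q)|=125$. For $L=\Q(\sqrt 2)$ ($f_L=8$), $(\Z/8)^\times$ has exponent $2$, so $f_E\ge 16$ and $|\disc(E/\Q)|\ge 2048$. For $L$ with $f_L\ge 13$, $|\disc(E/\Q)|\ge f_L^{3}\ge 2197$. A short check confirms that the next smallest admissible conductor over $L=\Q(\sqrt 5)$ is $f_E=15$, giving $|\disc(E/\Q)|=1125$. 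Hence $(E,L)=(\Q(\zeta_5),\Q(\sqrt 5))$ is the unique minimiser, with $|\disc(E/\Q)|=125$.

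\textbf{Step~2: smallest admissible non-norm.} For this pair, $5$ is totally, tamely ramified in $E/\Q$, and $(5)=\mathfrak p_5^{2}$ in $L$, so $\mathfrak p_5$ has ramification index $2$ in $E/L$, giving $\disc(E/L)=\mathfrak p_5$ of norm $5$. The two smallest prime-ideal norms in $\Or_L$ are $\Nm((2))=4$ (the rational prime $2$ is inert in $L/\Q$ since $5\not\equiv 1\pmod 8$) and $\Nm(\mathfrak p_5)=5$, so Lemma~\ref{pro:bound} gives $\lambda_{E,L}=(4\cdot 5)/5=4$, forcing $|\Nm_{L/\Q}(\gamma)|\ge 4$ for every non-norm $\gamma$. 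To realise equality I take $\gamma=2\in\Or_L$ and verify it is a non-norm via Hasse's Norm Theorem: the rational prime $2$ has order $4$ in $(\Z/5)^\times$ and is therefore inert in $E/\Q$, so the completion $E_{(2)}/L_{(2)}$ is unramified quadratic, and its norm group consists of elements of even valuation; since $v_{(2)}(2)=1$ is odd, $2$ is not a local norm at this place and therefore not a global norm either. Substitution into \eqref{eqn:disc} then yields
\[
|\Nm_\Z(\disc(\Or_\nat/\Z))|=125^{2}\cdot 4^{2}=2^{4}\cdot 5^{6}.
\]
For any alternative cyclic quartic $E'\neq\Q(\zeta_5)$, Step~1 gives $|\disc(E'/\Q)|\ge 1125$, hence $|\Nm_\Z(\disc(\Or'_\nat/\Z))|\ge 1125^{2}>2^{4}\cdot 5^{6}$ regardless of the non-norm $\gamma'$.

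\textbf{Main obstacle.} The delicate part is the case analysis of Step~1, specifically ruling out every real quadratic $L\neq\Q(\sqrt 5)$ through a careful study of the available order-$4$ lifts of Kronecker characters; the non-norm verification in Step~2 is then routine via the local-valuation criterion in an unramified quadratic extension of local fields.
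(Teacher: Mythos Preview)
Your Step~1 takes a different route from the paper: you enumerate cyclic quartic fields via order-4 Dirichlet characters and the conductor--discriminant formula, whereas the paper uses the explicit Hudson--Williams parametrisation $E=\Q(\sqrt{A(D+B\sqrt D)})$ and a four-case analysis of the discriminant in terms of $A,B,D$. Both arrive at $(E,L)=(\Q(\zeta_5),\Q(\sqrt 5))$ with $|\disc(E/\Q)|=125$; your approach is arguably cleaner, though you skip the conductor $f_L=12$ (harmless, since $3$ is not a sum of two squares, so no cyclic quartic contains $\Q(\sqrt 3)$). In Step~2 you again diverge: the paper takes $\gamma=-4$ and argues that $-1\notin\Nm_{E/L}(E^\times)$, while you take $\gamma=2$ and use the odd-valuation criterion at the inert prime $(2)$. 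Your choice is in fact the one that matches the stated bound: $\Nm_{L/\Q}(2)=4$ gives $125^2\cdot 4^2=2^4\cdot 5^6$, whereas $\Nm_{L/\Q}(-4)=16$ yields $2^8\cdot 5^6$, so the paper's $\gamma=-4$ does not actually attain the value claimed.

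There is, however, a genuine gap common to your argument and the paper's. You invoke Lemma~\ref{pro:bound} (through $\lambda_{E,L}=4$) to force $|\Nm_{L/\Q}(\gamma)|\ge 4$ for every non-norm $\gamma$, but that lemma explicitly assumes the algebra is unramified at the infinite primes. Here $L=\Q(\sqrt 5)$ is totally real and $E=\Q(\zeta_5)$ is CM, so both archimedean places of $L$ ramify in $E/L$, and a cyclic algebra over $L$ can be division while ramifying only at infinity. Concretely, the paper itself proves (en route to $-4$) that $-1\notin\Nm_{E/L}(E^\times)$; hence the unit $\gamma=-1\in\Or_L^\times$ already defines a cyclic \emph{division} algebra $(E/L,\sigma,-1)$, split at every finite place and non-split at the two real places. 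By \eqref{eqn:disc} its natural order has $|\disc(\Or_\nat/\Z)|=125^2\cdot 1^2=5^6<2^4\cdot 5^6$. So the inequality $|\Nm_{L/\Q}(\gamma)|\ge 4$ is false without an additional hypothesis excluding infinite ramification, and the lower bound in the theorem, as stated, is not a minimum over all cyclic division algebras of this shape.
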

  
\begin{proof}:
The fields $L$ and $E$ can be uniquely expressed as $L = \Q(\sqrt{D})$ and $E = \Q\left(\sqrt{A(D+B\sqrt D)}\right )$, with $A,B,C,D \in \Z$ such that $A$ is square-free and odd, $D = B^2+C^2$ is square-free, $B,C > 0$ and $(A,D) = 1$ (\cite{HW}, \cite{HSW}).
\medskip

\noindent\emph{\underline{Step 1.}}  

\begin{itemize}

	\item[i)]If $D \equiv 0 \zmod 2$, then $\disc(E/\Q) = 2^8\cdot A^2\cdot D^3 \ge 2^{11}$. The lower bound is attained for $D = 2$, $B = C = 1$, and $|A| = 1$. Using \eqref{eqn:disc} we deduce that $|\disc(\Or_{\nat}/\Z)| \ge 2^{22} > 2^4\cdot 5^6$ for any choice of $\gamma \in \Or_L$.

	\item[ii)] If $D \equiv B\equiv 1 \zmod 2$, then $\disc(E/\Q) = 2^6\cdot A^2 \cdot D^3 \ge 2^6\cdot 5^3$. This expression attains its minimum value for $D = 5$ and $|A| = B = 1$.  Hence, $|\disc(\Or_{\nat}/\Z)| \ge 2^{12}\cdot 5^6 > 2^4\cdot 5^6$ for any choice of $\gamma \in \Or_L$.

	\item[iii)] If $D\equiv 1 \zmod 2$, $B\equiv 0 \zmod 2$ and $A+B\equiv 3 \zmod 4$, we have $\disc(E/\Q)=2^4 \cdot A^2 \cdot D^3 \ge 2^4\cdot 5^3$. 
The minimum value is attained for $D = 5$, $B = 2$, and $A = 1$, so that
$|\disc(\Or_{\nat}/\Z)| \ge 2^8\cdot 5^6> 2^4\cdot 5^6$ for any choice of $\gamma \in \Or_L$.

	\item[iv)]  Finally, if $D\equiv 1 \zmod 2$, $B\equiv 0 \zmod 2$, $A+B\equiv 1 \zmod 4$ and $A\equiv \pm C \zmod 4$, we have $\disc(E/\Q) = A^2 \cdot D^3 \ge 5^3$. The minimum of this expression is attained for $D = 5$, $B = 2$ and $A = -1$, corresponding to the fields $E = \Q(\zeta_5)$ and $L = \Q(\zeta_5+\zeta_5^{-1}) = \Q(\sqrt 5)$. 
\end{itemize}

The last case provides us with a field extension $E/L = \Q(\zeta_5)/\Q(\sqrt 5)$ with $\disc(E/L) = 5$ and $\disc(E/\Q) = 5^3$. If we show $|\disc(\Or_{\nat}/\Z)| = 2^4\cdot 5^6$ for $\gamma = -4$ a non-norm element, the theorem will be proved.
\medskip

\noindent\emph{\underline{Step 2.}}  
The two smallest prime ideals in $\Or_L$ are $\mathfrak p_2 = 2\Or_L$ and $\mathfrak p_5$, a factor of $5\Or_L = \mathfrak p_5 \mathfrak p_5'$, of respective norms 4 and 5. Hence, $\lambda_{E,L} = 4$, and any suitable non-norm element for $E/L$ will satisfy $|\Nm_{\Or_L}(\gamma)|\ge 4$ (cf. \eqref{eqn:constant}). We show that $-4 \not \in \Nm_{E/L}(E^\times)$. Since the norm is multiplicative and $4 = \Nm_{E/L}(2)$, it suffices to show that $-1 \notin \Nm_{E/L}(E^\times)$. Suppose that $x\in L$ with $\Nm_{E/L}(x) = -1$. Then $\Nm_{E/\Q}(x) = 1$, and we can write $x = \zeta v$ with $\zeta$ a root of unity and $v \in L^\times$ \cite[Prop.~6.7]{JM}. Consequently, $-1 = \Nm_{E/L}(\zeta)\cdot\Nm_{E/L}(v) = v^2$. But $-1$ is not a square in $L$. \qed
\end{proof}

In the remaining cases, our base field will be $F = \Q(i)$. For each rational prime $p$, we write $p\Z[i] = \mathfrak p_p^2$, $\mathfrak p_p\mathfrak p_p'$ or $\mathfrak p_p = (p)$ for the cases where $p$ is ramified, split or inert in $\Z[i]$, respectively.

\begin{theorem}
\label{thm:res3} 
Let $\Q(i) \subset L \subset E$, with $[E:\Q(i)] = 4 $ and $[E:L] = 2$. Let $\G(E/L)=\langle \sigma \rangle$. Any cyclic division algebra $\left(E/L, \sigma, \gamma \right)$ under these assumptions satisfies $\Nm_{\Z[i]}(\disc({\cal O}_{\nat}/\Z[i])) \ge 2^4\cdot 17^6$, with quality attained for $L = \Q(i,\alpha),$ $ E = \Q(i,\sqrt{\alpha})$ and {$\gamma=1+i$, with $\alpha$ a root of $f(X)=X^2+X-i$}.
\end{theorem}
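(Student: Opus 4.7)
The plan is to follow the three-step strategy with $F = \Q(i)$ and $n = n_r = 2$. Since $\Q(i)$ has class number one and $\Z[i]^\times = \langle i\rangle$, the exact sequence~\eqref{eqn:exact} collapses to $\Cl_{\mathfrak m} \simeq (\Z[i]/\mathfrak m)^\times/\langle i\rangle$, making the class field theory of Step~1 fully explicit. I will enumerate cyclic quartic extensions $E/\Q(i)$ by the norm of their conductor $\mathfrak m$, which via the Artin isomorphism~\eqref{eqn:artiniso} and the conductor-discriminant formula~\eqref{eqn:condisc} controls $\disc(E/\Q(i))$. For prime conductors $\mathfrak p$: (a) if $\mathfrak p$ is split of residue characteristic $p\equiv 1\pmod 4$, then $\Cl_{\mathfrak p}\simeq \Z/((p-1)/4)$ and a $\Z/4$ quotient requires $p\equiv 1\pmod{16}$, with minimal case $\mathfrak p = (1+4i)$ of norm $17$; (b) if $\mathfrak p=(p)$ is inert with $p\equiv 3\pmod 4$, the first admissible case is $p=7$ of norm $49$; (c) for $\mathfrak m = (1+i)^s$, a filtration analysis of $U^n = 1+(1+i)^n\Z_2[i]$ together with the location $i\in U^1\setminus U^2$ shows that $\Cl_{(1+i)^s}$ has no $\Z/4$ quotient for $s\le 4$, forcing norm at least $32$. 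Composite conductors only enlarge the discriminant via~\eqref{eqn:condisc}. The minimum $17^3$ is thus attained at $\mathfrak m = (1+4i)$, where all three nontrivial characters of the cyclic quotient have conductor $(1+4i)$, giving $\disc(E/\Q(i)) = (1+4i)^3$. Concretely the tower is $L = \Q(i,\alpha) = \Q(i)(\sqrt{1+4i})$ inside $E = \Q(i,\sqrt{\alpha})$, since $\disc(X^2+X-i) = 1+4i$.

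For Steps~2--3, the prime $(1+4i)$ totally ramifies in $L/\Q(i)$ as $\mathfrak P^2$ with residue field $\F_{17}$, and $\lambda_{E,L}<1$, so I first seek a non-norm unit. Since $\mathfrak P$ is totally ramified, the nontrivial element of $\G(L/\Q(i))$ acts trivially on $\F_{17}$, hence for every $u\in\Or_L^\times$,
\begin{align*}
\overline{u}^{\,2} = \overline{\Nm_{L/\Q(i)}(u)}\in\overline{\Z[i]^\times} = \langle\overline{i}\rangle = \langle 4\rangle\subset\F_{17}^\times,
\end{align*}
and a direct check shows that the preimage of $\langle 4\rangle$ under squaring on $\F_{17}^\times$ is exactly the squares subgroup. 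Thus $\overline u$ is always a square. Since $E_{\mathfrak Q}/L_{\mathfrak P}$ is tamely totally ramified of degree~$2$, formula~\eqref{eqn:nonnorm} identifies the unit local norms with the units whose residue is a square, so no unit of $\Or_L$ fails to be a local norm at $\mathfrak P$ and a non-norm unit cannot exist. Balancing as in Step~3, I take $\gamma = 1+i\in\Z[i]\subset\Or_L$: from $1+4i\equiv 0\pmod{\mathfrak P}$ one reads $i\equiv 4$, so $\overline{1+i}\equiv 5$, a non-residue modulo~$17$; hence $1+i$ is not a local norm at $\mathfrak P$, and by Hasse's Norm Theorem not a global norm. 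Lemma~\ref{pro:disc} with $\Nm_{L/\Q(i)}(1+i) = (1+i)^2$ then gives
\begin{align*}
\Nm_{\Z[i]}(\disc(\Or_{\nat}/\Z[i])) = \Nm_{\Z[i]}\bigl((1+4i)^6\bigr)\cdot\Nm_{\Z[i]}\bigl((1+i)^4\bigr) = 17^6\cdot 2^4.
\end{align*}

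For \emph{minimality}, the balance comparison~\eqref{eqn:discsize}--\eqref{eqn:balance} yields $D_{E/L}(1+i) = 17^3\cdot 4$. Any other cyclic quartic tower $L'\subset E'$ over $\Q(i)$ either has $\Nm_{\Z[i]}(\disc(E'/\Q(i))) > 17^3$ (so $D_{E'/L'}(\gamma') > D_{E/L}(1+i)$ for every nonzero $\gamma'$), or coincides up to conjugation with ours (since the cyclic quartic extension of conductor $(1+4i)$ is unique up to complex conjugation), in which case the identical $\mathfrak P$-local obstruction forces $\Nm_{\Or_{L'}}(\gamma')\ge 4$. The \textbf{main obstacle} I anticipate is the wild-ramification filtration analysis in case~(c) of Step~1: ruling out small powers of $(1+i)$ as conductors for a cyclic-$4$ character requires careful bookkeeping of where $\langle i\rangle$ sits in $U^1/U^n$ and how squaring acts on the successive $\F_2$-quotients $U^n/U^{n+1}$; by contrast, everything at the winning prime $(1+4i)$ is a clean tame computation.
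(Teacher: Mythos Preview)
Your approach mirrors the paper's closely: both enumerate cyclic quartic extensions of $\Q(i)$ by conductor via the isomorphism $\Cl_{\mathfrak m}\simeq(\Z[i]/\mathfrak m)^\times/\langle i\rangle$, identify $(1+4i)$ as giving the minimal discriminant $17^3$, rule out unit non-norms via the residue-field square argument at $\mathfrak P\mid(1+4i)$, and settle on $\gamma=1+i$.

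There is, however, a genuine gap in your minimality step. The implication ``$\Nm_{\Z[i]}(\disc(E'/\Q(i)))>17^3\Rightarrow D_{E'/L'}(\gamma')>D_{E/L}(1+i)$ for every nonzero $\gamma'$'' would require the next-smallest discriminant to already exceed $17^3\cdot 4$, which you have not shown; your dismissal ``composite conductors only enlarge the discriminant'' is exactly where this fails. Take $\mathfrak m=(5)=(2+i)(2-i)$ of norm $25$: here $(\Z[i]/5)^\times\simeq(\Z/4)^2$ and $\Cl_{(5)}\simeq\Z/4$, with ray class field $E'=\Q(\zeta_{20})$ and intermediate field $L'=\Q(i,\sqrt 5)$. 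All three nontrivial characters have conductor $(5)$, so $\Nm_{\Z[i]}(\disc(E'/\Q(i)))=5^6=15625$, which lies strictly between $17^3=4913$ and $17^3\cdot 4=19652$. Moreover, your ``no non-norm unit'' argument does \emph{not} transfer to this tower: at the ramified prime $\mathfrak P\mid(2+i)$ the residue field is $\F_5$, where $\overline{\Z[i]^\times}=\langle\overline i\rangle=\langle 3\rangle=\F_5^\times$, so the constraint $\overline u^{\,2}\in\langle\overline i\rangle$ is vacuous; in fact $\overline i=3$ is itself a non-square in $\F_5$, so $\gamma'=i\in\Or_{L'}^\times$ is a non-norm for $E'/L'$. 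This yields
\[
\Nm_{\Z[i]}\bigl(\disc(\Or_{\nat}'/\Z[i])\bigr)=5^{12}=244140625<2^4\cdot 17^6=386201104.
\]
The paper's proof shares this oversight (its list ``$34,49,64,\ldots$'' of admissible conductor norms omits $25$), so the stated bound $2^4\cdot 17^6$ appears to be incorrect rather than merely unproved.
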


\begin{proof}:

\noindent\emph{\underline{Step 1.}}   
We start by finding the smallest possible discriminant over $\Q(i)\\
$ for cyclic extensions of degree 4. Let $E$ any be any such extension. By the existence theorem of Class Field Theory, we know that $E$ is contained in the ray class field $ H_{\mathfrak m}$, where $\mathfrak m$ is an admissible modulus for the extension $E/\Q(i)$. The smallest ray class field will have conductor $\mathfrak f = \mathfrak f_{E/\Q(i)}$, which, since $\Q(i)$ is a complex field, will be an ideal of $\Z[i]$. 

The restriction of the Artin map \eqref{eqn:artin} to $I_{\mathfrak f}$ gives us a canonical isomorphism $\G(H_{\mathfrak f}/\Q(i)) \simeq I_{\mathfrak f}/P_{\mathfrak f} = C_{\mathfrak f}$ (cf. \eqref{eqn:artiniso}), which implies $[H_{\mathfrak f}:\Q(i)] = |C_{\mathfrak f}|$. Furthermore, since the class group $C_{\Q(i)}$ is trivial and $\Z[i]^{\times}=\langle i \rangle$, by \eqref{eqn:exact} we have the exact sequence
\begin{align*}
0\to \langle i \rangle \to (\Z[i]/{\mathfrak f})^{\times} \to  C_{\mathfrak f} \to 0.
\end{align*}
Thus, $C_{\mathfrak f} \simeq (\Z[i]/f)^{\times}/ \hbox{Im} \langle i \rangle$.
The ray class field of conductor 2 is trivial, which is not our case, so  $\mathfrak f$ does not divide 2. The map $\langle i \rangle \to(\Z[i]/\mathfrak f)^{\times} $ is injective, so that $[H_{\mathfrak f}:Q(i)] =  \frac{1}{4}\left|(\Z[i]/\mathfrak f)^{\times}\right|$. Consequently, $$4 = [E:Q(i)] \mid [H_{\mathfrak f}:Q(i)] \Rightarrow 16 \mid \left | (\Z[i]/\mathfrak  f)^{\times}\right | = \Nm(\mathfrak f) \text{ and }  \Nm(\mathfrak f)\ge17 .$$

Fortunately we can find ideals of norm 17 with the required properties. 
We fix $\mathfrak f = 1+4i$ (or $\mathfrak f=1-4i$), 
so that the ray class field of conductor $\mathfrak f$ is precisely $H_{\mathfrak f} = \Q(i, \sqrt[4]{1+4i})$. 
Since $\mathfrak f$ is a prime ideal, all non-trivial characters of $\G(H_{\mathfrak f}/\Q(i))$ have conductor $\mathfrak f$, so that by the conductor-discriminant formula \eqref{eqn:condisc}, $\disc(H_{\mathfrak f}/\Q(i)) = \mathfrak f^3$. The absolute discriminant is $\disc(H_{\mathfrak f}/\Q) = 17^3\cdot 4^4$. 

We choose $L = \Q(i, \sqrt{1+4i})$ and $E = H_{\mathfrak f}$, and prove that this choice yields the smallest possible discriminant of a cyclic extension of degree 4 over $\Q(i)$. To that end, let $\mathfrak m$ be any ideal of $\Z[i]$ of norm different from 17 for which $16 \mid \left |(\Z[i]/\mathfrak m)^\times\right |$, and let $E' \subset H_{\mathfrak m}$ be a subfield with $E'/\Q(i)$ cyclic of degree 4. Assume that $\disc(E'/\Q)\le 17^3 \cdot 4^4$. Since $E'$ has conductor $\mathfrak m$, by the minimality of $\mathfrak m$  the quartic characters of $\G(E'/Q(i))$ have conductor $\mathfrak m$. 
The quadratic character could have smaller conductor, but it cannot be smaller than 3, since $\Q(i)$ admits no ray class field of conductor $\mathfrak m$ with $\Nm(\mathfrak m) < 9$. 
Using the conductor-discriminant formula and as norms in $\Q(i)$ are positive, we have
\begin{align}
\label{condisc}
\begin{split}
	&9 \cdot \Nm(\mathfrak m^2) \le \Nm(\disc(E'/\Q(i))), \\
	\Rightarrow\ &9 \cdot \Nm(\mathfrak m)^2\cdot4^4 \le \disc(E'/\Q)< 17^3 \cdot 4^4, \\
	\Rightarrow\ &\Nm(\mathfrak m) < 23,36\ldots,
\end{split}
\end{align}
and so $\left |(\Z[i]/\mathfrak m)^*\right |<23.$ Since $16 \mid \left|(\Z[i]/\mathfrak m)^\times\right|$, we have $\left|(\Z[i]/\mathfrak m)^\times\right| = 16$. But then necessarily $\Nm(\mathfrak m) = 17$, a contradiction to our assumption.

\medskip

\noindent\emph{\underline{Step 2.}}   
Let $\Or_L = \Z[i,\alpha]$, with $\alpha$ a root of $f(X)=X^2+X-i$. A pair of norm-wise smallest primes in $\Or_L$ is $(\mathfrak p_2, \mathfrak p_5)$, of respective norms 2 and 5. Consequently, $\lambda_{E,L} = \frac{20^2}{17^6} < 1$ (cf. \eqref{eqn:constant}), so that $\Nm_{\Z[i]}(\disc(\Or_{\nat}/\Z[i])$ will achieve the smallest possible value among all central division algebras satisfying the given conditions for a unit non-norm element $\gamma \in \Or_L^{\times}$.  Unfortunately, as we will show next, there is no suitable unit in this case, forcing us to consider other non-norm elements.

\medskip
\noindent\emph{\underline{Step 3.}}  
By the Hasse Norm Theorem, to show that an element in $L^{\times}$ is not a norm, it suffices to show that it is not a local norm at some prime of $E$. We need to produce a unit $\gamma \in \Or_L^\times$ with  $\gamma \notin \Nm_{E/L}(E^{\times})$, and since in an unramified local extension all units are norms, we consider the local extension corresponding to the ramifying prime $\mathfrak f = (1+4i)\Z[i]$. It is not difficult to verify that $\mathfrak f\Or_L=\mathfrak p_{L}^2$, $\mathfrak p_{L}\Or_E=\mathfrak p_{E}^2$ and  $\Nm_{L}(\mathfrak p_{L})=\Nm_{E}(\mathfrak p_{E})={17}$.  

 Let $k=L_{\mathfrak p_L}$ and $K= E_{\mathfrak p_E}$ be the completions of $E$ and $L$ with respect to the discrete  valuations associated to the primes $\mathfrak p_L$ of $L$ and $\mathfrak p_E$ of $E$.  Then   $K/k$ is a totally and tamely ramified cyclic local extension of degree 2, with $\overline{K}=\overline {k} = \F_{17}$. Using \eqref{eqn:nonnorm}, $U_k \cap \Nm_{K/k}(K^{\times})=\langle{ \zeta}_{16}^2~\rangle U_{L}^{(1)}$ and, by Hensel's lemma, an element in $\Or_L^{\times}$ will be a non-norm element for $E/L$ if and only if its image in  $\F_{17}$ is not a square in $\F_{17}^{\times}$.  

Since $\Nm_{L/\Q}(x)=\Nm_{\Q(i)/\Q}(\Nm_{L/\Q(i)}(x))$ and the norm of every unit in $\Z[i]$ is $1$, we have $\Nm_{L/\Q}(\Or_L^{\times})=\{1^2\}$. Consequently, the norm of the image in $\F_{17}$ of every element in $\Or_L^{\times}$ is a square. Over finite fields, this is the case if and only it such an image is itself a square, which for its part implies that every element in $\Or_L^{\times}$ maps to $\langle{\zeta}_{16}^2~\rangle$ and, hence, is in the image of the map $ \Nm_{E/L}$. We conclude that there exists no unit which is a non-norm element for $E/L$, forcing us to use\eqref{eqn:balance} to balance the sizes of $\disc(E/\Q(i))$ and $\gamma$ in \eqref{eqn:disc} in order to achieve minimality of $\Nm(\disc(\Or_{\nat}/\Z[i]))$. 
\medskip

  Since the norm of an element $x\in \Or_L$ is the product of the norms of the prime ideals dividing $x$, it is easy to verify that the smallest possible norm in $\Or_L$ is 4. We set $\gamma = 1+i \in \Or_L$ with $\Nm_L(1+i) = 4$, which yields $D_{E/L}(\gamma) = 17^3 \times 2^2$.
In order to study the possible values of $D_{E'/L'}(\gamma')$, let $\mathfrak m$ be any ideal of $\Z[i]$ of norm different from 17, for which $16 \mid \left |(\Z[i]/\mathfrak m)^\times\right |$, and let $E'$ be a subfield of the ray class field $H_{\mathfrak m}$, with $E'/\Q(i)$ cyclic of degree 4.
 
The smallest possible norms for $\mathfrak m$ are given by 
\begin{alignat*} 
	34 &= \Nm(\mathfrak p_{17}\mathfrak p_2),\quad 49 &&= \Nm(\mathfrak p_7), \quad &&64 = \Nm(\mathfrak p)^6, \\ 
	68 &= \Nm(\mathfrak p_{17}\mathfrak p_2^2),\quad  128 &&= \Nm(1+i)^7, \quad &&\ldots
\end{alignat*} 

Using \eqref{condisc}, we see that $9\cdot \Nm(\mathfrak m)^2  \le \Nm(\disc(E'/\Q(i)))$, so that we only need to consider the cases for which $9\cdot \Nm(\mathfrak m)^2  \le 17^3 \times 2^2 \Rightarrow \Nm(\mathfrak m) \le 46,728...$, \emph{i.e.}, only the case $\mathfrak m = (1+4i)(1+i)$. 

We compute $\Nm(\disc(E'/F))$. The smallest quadratic discriminant for an extension in which both $(1+i)$ and a prime of norm $17$ ramify is $(4+i)(1+i)^2$, corresponding to the extension $L' = \Q(\sqrt{4+i})$. Thus, $E' = \Q(\sqrt[4]{4+i})$, with $\disc(E'/\Q(i))=(4+i)^3(1+i)^4$ and $\Nm(\disc(E'/\Q(i)) = 17^3 \cdot 2^4$. Consequently, for all $\gamma'\in \Or_{L'}$, $D_{E'/L'}(\gamma') \ge 17^3\times 2^4  > 17^3 \times 2^2 = D_{E/L}(\gamma)$. By \eqref{eqn:discsize}, we are done.

 \qed
\end{proof}

\begin{theorem}
\label{thm:res4} 
Let $\Q(i) \subset L \subset E$ with $[E:\Q(i)]=6$ and
$ [E:L] = 3$. Any cyclic division algebra $\left(E/L, \sigma, \gamma\right)$ satisfies $\Nm(\disc(\Or_{\nat}/\Z[i]))\ge3^{18}\cdot 13^{12}$. The lower bound is achieved for $L = \Q(\zeta_{12})$, $E = L(\beta)$ and  $\gamma=\frac{1+i\sqrt 3}{2}\in \Or_L^{\times}$, where $\zeta_{12}$ is a primitive $12^\text{th}$ root of unity and $\beta$ a root of $f(X) = X^3-(1+i)X^2+5iX-(1+4i)$.  
\end{theorem}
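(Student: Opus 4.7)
The plan is to follow the three-step strategy of Subsection~3.1.

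\emph{Step 1: minimal $\disc(E/\Q(i))$.} A cyclic degree-6 extension $E/\Q(i)$ has a unique quadratic subfield, whose nontrivial character has some conductor $\mathfrak{f}_2$, and a unique cubic subfield, whose two cubic characters have a common conductor $\mathfrak{f}_3$, together with a pair of sextic characters of conductor $\text{lcm}(\mathfrak{f}_2,\mathfrak{f}_3)$. By \eqref{eqn:condisc},
\begin{equation*}
\disc(E/\Q(i)) \;=\; \mathfrak{f}_2\cdot\mathfrak{f}_3^{2}\cdot\text{lcm}(\mathfrak{f}_2,\mathfrak{f}_3)^{2}.
\end{equation*}
Since $\Cl(\Q(i))=1$ and $\Z[i]^{\times}=\langle i\rangle$, \eqref{eqn:exact} yields $C_{\mathfrak{m}}\simeq(\Z[i]/\mathfrak{m})^{\times}/\langle i\rangle$. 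A direct case analysis ordered by $\Z[i]$-norm establishes that the minimal $\mathfrak{f}_3$ is $\mathfrak{p}_{13}=(3+2i)$ of norm 13 (smaller moduli give $C_{\mathfrak{m}}$ of order not divisible by 3), while the minimal $\mathfrak{f}_2$ coprime to $\mathfrak{f}_3$ is $(3)$ of norm 9, noting that the powers $(1+i)^{k}$ with $k\le 3$ have $(\Z[i]/(1+i)^{k})^{\times}$ fully generated by $i$ and thus yield trivial $C_{\mathfrak{m}}$. The associated subfields are $L=H_{(3)}=\Q(\zeta_{12})$ and $H_{\mathfrak{p}_{13}}=\Q(i)(\beta)$ with $\beta$ a root of $f$, so $E=L(\beta)$ and
\begin{equation*}
\disc(E/\Q(i))=(3)^{3}\mathfrak{p}_{13}^{4},\qquad \Nm_{\Z[i]}(\disc(E/\Q(i)))=3^{6}\cdot 13^{4}.
\end{equation*}

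\emph{Steps 2 and 3: the unit non-norm.} Since $\disc(L/\Q(i))=(3)$, the prime $(1+i)$ is unramified in $L$ and splits (as $-3\equiv 1$ is a square modulo $(1+i)$), so $\Or_{L}$ contains two primes of absolute norm 2. Likewise $\mathfrak{p}_{13}$ splits in $L$ because $-3$ is a square modulo 13, and the tower formula applied to the discriminants above yields $\disc(E/L)=\mathfrak{P}_{13}^{2}\mathfrak{P}_{13}'^{2}$. Hence $\lambda_{E,L}=16/13^{4}<1$, and the optimum is attained by a unit non-norm. The completion of $E/L$ at $\mathfrak{P}_{13}$ is a totally tamely ramified cyclic cubic extension $K/k$ with residue field $\F_{13}$, and by \eqref{eqn:nonnorm} a unit of $\Or_{L}$ is a local norm iff its reduction at $\mathfrak{P}_{13}$ is a cube in $\F_{13}^{\times}\simeq\Z/12\Z$. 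Choose $\gamma=\zeta_{6}=\tfrac{1+i\sqrt{3}}{2}\in\Or_{L}^{\times}$; its image has order 6 in $\F_{13}^{\times}$, hence lies outside the order-4 subgroup of cubes. By Hasse's Norm Theorem $\gamma$ is a non-norm, $(E/L,\sigma,\gamma)$ is a division algebra, and Lemma~\ref{pro:disc} delivers
\begin{equation*}
\Nm_{\Z[i]}(\disc(\Or_{\nat}/\Z[i]))\;=\;(3^{6}\cdot 13^{4})^{3}\;=\;3^{18}\cdot 13^{12},
\end{equation*}
matching the claimed bound.

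\emph{Main obstacle.} The crux is Step 1: with $\lambda_{E,L}<1$ and a unit non-norm available, any cyclic degree-6 extension $E'/\Q(i)$ with strictly smaller $\Nm_{\Z[i]}(\disc(E'/\Q(i)))$ that also admits a unit non-norm would beat our bound. Eliminating such candidates requires enumerating admissible moduli, computing $(\Z[i]/\mathfrak{m})^{\times}/\langle i\rangle$, and verifying minimality of $\mathfrak{f}_{2}$ and $\mathfrak{f}_{3}$ also in overlapping-support cases (for instance $\mathfrak{f}_{2}=(2)\mathfrak{p}_{13}$ gives a strictly larger $\disc(E'/\Q(i))$, but each such possibility must be ruled out explicitly). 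This bookkeeping is analogous to, but more intricate than, the argument following \eqref{condisc} in the proof of Theorem~\ref{thm:res3}, due to the richer character decomposition in the cyclic degree-6 setting.
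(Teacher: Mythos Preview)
Your approach coincides with the paper's: decompose the cyclic sextic via its character group, pin down the minimal cubic conductor $\mathfrak{p}_{13}$ and minimal coprime quadratic conductor $(3)$, verify $\lambda_{E,L}<1$, and exhibit $\zeta_6$ as a non-norm unit by reducing at a ramified prime above $13$ and observing that its image in $\F_{13}^{\times}$ has order $6$ and hence is not a cube.

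One slip, harmless for the conclusion: $(1+i)$ is \emph{inert} in $L=\Q(\zeta_{12})$, not split. Over the residue field $\F_2$ every element is a square, so your criterion ``$-3\equiv 1$ is a square'' is vacuous; the correct test is to reduce the minimal polynomial $X^2+X+1$ of an integral generator of $\Or_L$ over $\Z[i]$, which is irreducible over $\F_2$. The two norm-wise smallest primes of $\Or_L$ therefore have absolute norms $4$ and $9$ (the latter lying above the ramified prime $3$), giving $\lambda_{E,L}=36^{2}/13^{4}$ rather than $16/13^{4}$. The paper also explicitly dispatches the overlapping-conductor case you flag as the remaining obstacle, checking that a quadratic conductor divisible by $\mathfrak{p}_{13}$ forces $\Nm(\disc(E'/\Q(i)))\ge 13^{5}\cdot 2^{6}>13^{4}\cdot 3^{6}$.
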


\begin{proof}:

\noindent\emph{\underline{Step 1.}} 
We start by finding the smallest possible discriminant over $\Z[i]$ for cyclic extensions of degree 6. We denote by $L_2$, $L_3$ and $E = L_2L_3$ cyclic extensions of degree 2 and 3 over $F = \Q(i)$ and their compositum, respectively. Using \eqref{eqn:exact}, \eqref{eqn:condisc} and arguments similar to those used in Theorem~\ref{thm:res3}, we deduce that the smallest possible cubic discriminant is $\mathfrak p_{13}^2 = (2-3i)^2$, corresponding to the extension $\Q(i, \beta)/\Q(i)$ \cite[Table p. 883, row 32]{BeMOl}, where $\beta$ is a root of the polynomial $f(X) = X^3-(1+i)X^2+5iX+(-1-4i)$.

Since we want to minimize $\disc(L_2 L_3/\Q(i))$, the use of \eqref{eqn:exact}, \eqref{eqn:condisc} requires that we consider separately the cases where $\mathfrak m$ is and is not relatively prime to $\mathfrak p_{13}$, and check which case yields a smaller value for this discriminant.
\begin{itemize}
	\item[i)] $(\mathfrak m, \mathfrak p_{13}) = 1$. The smallest possible discriminant corresponds to the extension $L_2=\Q(i)(\sqrt{-3}) =\Q(\zeta_{12})$ of $\Q(i)$, with $\disc(L_2L_3/\Q(i)) = 3^3\mathfrak p_{13}^4$, of norm $\Nm(\disc(L_2L_3/\Q(i))) = 13^4\cdot 3^6$.

	\item[ii)] $(\mathfrak m, \mathfrak p_{13}) \neq 1$. If $\disc(L_2'/\Q(i))=\mathfrak p_{13}\times \mathfrak a$ for some ideal $\mathfrak a \ne(1)$, the best possibility is $\mathfrak p_{13}\mathfrak p_2^2$, corresponding to the extension $L_2'=\Q(i)(\sqrt{3-2i})$ of $\Q(i)$ with discriminant ideal $\mathfrak p_{13}\times \mathfrak p_2^2$. This choice yields\footnote{The factor $\mathfrak p_{13}^5$ comes from the fact that $E/L_3$ is tamely ramified and, thus, has  as discriminant the prime $\mathfrak q_{13}$ lying above 13 in $L_3$.} $\disc(L_2'L_3/\Q(i)) = \mathfrak p_{13}^5\times \mathfrak p_2^6$, with $\Nm(\disc(L_2'L_3/\Q(i))) = 13^5\cdot 2^6 > 13^4\cdot 3^6$.
\end{itemize}

We conclude that the smallest possible discriminant over $\Z[i]$ for cyclic extensions of degree 6 is $3^6 \cdot 13^4$, achieved in the extension $E = L_2L_3$. The involved rings of integers are $\Or_{2} = \Z\left [i, \frac{i+\sqrt{3}}{2}\right]$
and $ \Or_{3} = \Z[i, \beta]$. 
The discriminants of the extensions involved are summarized in Table~\ref{tab:disc} below. 

\begin{table}[H]
\centering
\scalebox{0.98}{
\begin{tabular}{l|ll|ll|ll|ll}
 & $\disc(\cdot/\Q(i))$ & $\Nm_{\Z[i]}$ & $\disc(\cdot/L_2)$ & $\Nm_{\Or_2}$ & $\disc(\cdot/L_3)$ & $\Nm_{\Or_3}$  \\
 \hline
 $E$ & $\mathfrak p_3^3\mathfrak p_{13}^4$ & $3^6\cdot 13^4$ & $\mathfrak q_{13}^2\mathfrak q_{13}'^2$ & $13^4$ &  $\mathfrak s_3$ & $3^6$ \\
 $L_3$ & $\mathfrak p_{13}^2$ & $13^2$ & &  & & \\
 $L_2$ & $\mathfrak p_{3}$ & $3^2$ & &  & &
\end{tabular}}
\caption{Relative discriminants of the field extensions involved.}
\label{tab:disc}
\end{table}
 
\medskip
\noindent\emph{\underline{Step 2.}}  
A pair of smallest primes in $\Or_2$ is $(\mathfrak q_2, \mathfrak q_3)$ of norms 4 and 9, respectively, where $\mathfrak q_2$ is any prime above 2, and $\mathfrak q_3$ is any prime above 3. Consequently, $\lambda_{E,L} = \frac{4^29^2}{13^4}<1$ (cf. \eqref{eqn:constant}), and  $\Nm_{\Z[i]}(\disc(\Or_{\nat}/\Z[i])$ will achieve its smallest possible value  
for a unit non-norm element $\gamma \in \Or_2^{\times}$, $\gamma \notin  \Nm_{E/L_2}(E^\times)$.

\medskip
\noindent\emph{\underline{Step 3.}} To simplify notation, we set, $L_2 = L$, and $\Or_2 = \mcl$. We prove that the unit $\gamma = \frac{1+i\sqrt 3}{2}\in \Or_L^{\times}$ satisfies $\gamma \notin  \Nm_{E/L}(E^\times)$.

The prime $\mathfrak q_{13}$ ramifies in the extension $E/L$. Let $\mathfrak t_{13}$ be a prime of $E$ extending $\mathfrak q_{13}$, and $ k = L_{\mathfrak q_{13}}$, $K = E_{\mathfrak t_{13}}$ be the completions of $L$ and $E$ with respect to the corresponding valuations, with $|\overline {k}|=|\overline{K}|=13$.
 
The local extension  $K/k$ is a totally and tamely ramified extension of degree 3. Since the image of the unit $\gamma = \frac{1+i\sqrt 3}{2}$ in the residue field $\F_{13}$ is 4, which has multiplicative order 6, we deduce that ${\gamma}\not \in \langle ~ \overline{\zeta}_{12}^3 \rangle U_{L}^{(1)}$. By \eqref{eqn:nonnorm} and Hasse's Norm Theorem, the theorem follows.
\qed
\end{proof}

\begin{theorem}
\label{thm:res5}
Let $\Q(i) \subset L \subset E$ with $[E:\Q(i)]=6$ and $ [E:L]=2$, and let $(E/L,\sigma,\gamma)$ be a cyclic division algebra. Then, 
$\Nm(\disc(\Or_{\nat}/\Z[i]))\ge2^{6}\cdot 3^{12}\cdot 13^{8}$ with equality for $\gamma = 1+i$ and $E = LL_2$, where $L_2 = \Q(i, i\sqrt{3}) = \Q(\zeta_{12})$ and $L = \Q(i, \beta)$ with $\beta$ a root of the polynomial $f(X) = X^3-(1+i)X^2+5iX-(1+4i)$.    
\end{theorem}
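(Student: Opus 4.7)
The plan is to apply the three-step template of the section, exploiting the overlap with Theorem~\ref{thm:res4}. Since $E/\Q(i)$ is once again a cyclic sextic extension, Step~1 carries over verbatim: the unique minimizer of $\Nm_{\Z[i]}(\disc(E/\Q(i)))$ among cyclic degree-$6$ extensions is $E = L_2L_3$ with $L_2 = \Q(\zeta_{12})$ and $L_3 = \Q(i,\beta)$, of norm $3^6 \cdot 13^4$. The only novelty is to take $L = L_3$ so that $[E:L]=2$; reading Table~\ref{tab:disc} then gives $\disc(E/L) = 3\Or_L$ of norm $3^6$.

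For Step~2, I would locate the smallest primes of $\Or_L$: reducing $f(X)$ modulo $2$ yields $(1+i)\Or_L = \mathfrak{P}^3$ with $\Nm_L(\mathfrak{P})=2$, and the factorizations at $3,5,7$ show that the next smallest primes are the two ramified primes above $13$, each of norm $13$. Thus $\lambda_{E,L} \ll 1$, suggesting that a unit non-norm element would achieve the minimum if any existed.

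The core of Step~3 is to rule out units. The only prime of $L$ ramifying in $E/L$ is the unique prime $\mathfrak{q}_3$ above $3$ (inert from $\Q(i)$, with residue field $\F_{729}$), and the local extension there is totally tamely ramified of degree~$2$; by \eqref{eqn:nonnorm} a unit $u \in \Or_L^{\times}$ is a non-norm iff $\bar u \in \F_{729}^{\times}$ is a non-square. The key observation is that $L$ is totally complex, so $\Nm_{L/\Q}(u)=1$, which under the identification of $\Nm_{L/\Q}(u)$ with $\bar u^{(729-1)/2}$ modulo $3$ forces $\bar u$ to be a square; thus no unit works. It then remains to exhibit $\gamma = 1+i$: its absolute norm is $8$, and $\overline{1+i}$ generates the order-$8$ subgroup $\F_9^{\times}$, which does not lie in the squares of $\F_{729}^{\times}$ since $8 \nmid 364$. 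Plugging into \eqref{eqn:disc} returns the claimed value $2^6 \cdot 3^{12} \cdot 13^8$.

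The hardest part is the lower bound. The same totally-complex congruence also kills $\Nm_{L/\Q}(\gamma) = 4$ (the reduction is then a square, so $\gamma$ is a global norm by Hasse), but the case $\Nm_{L/\Q}(\gamma) = 2$ would produce a smaller $\gamma$ if it occurred, and must be excluded by showing that $\mathfrak{P}$ is non-principal in $\Or_L$; I expect this to require an explicit class-group computation for $L_3 = \Q(i,\beta)$ and to be the main obstacle. Once this is settled, the Step~2 analysis of small primes rules out every remaining norm below $8$, and any alternative tower $(E'/L')$ with strictly larger sextic discriminant is defeated by the balancing inequality \eqref{eqn:balance} in conjunction with the sharpness already established in Theorem~\ref{thm:res4}.
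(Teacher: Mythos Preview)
Your overall plan matches the paper's, but there is a concrete computational error that sends Step~3 in the wrong direction. Reducing $f(X)=X^3-(1+i)X^2+5iX-(1+4i)$ modulo the prime $(1+i)$ (residue field $\F_2$, with $i\equiv 1$) gives
\[
f(X)\equiv X^3+X+1 \pmod{1+i},
\]
which is \emph{irreducible} over $\F_2$. Hence $(1+i)\Or_L$ is inert, a single prime of norm $2^3=8$, not $\mathfrak P^3$ with $\Nm_L(\mathfrak P)=2$. (This agrees with the paper's Step~2, where the two smallest primes of $\Or_L$ have norms $8$ and $13$.) Consequently the smallest absolute norm of a non-unit in $\Or_L$ is already $8$, achieved by $\gamma=1+i$; there are simply no elements of norm $2$ or $4$, so no class-group computation is needed and the ``main obstacle'' you flag does not exist.

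A secondary point: your sentence ``the reduction is then a square, so $\gamma$ is a global norm by Hasse'' for a putative norm-$4$ element would not be a complete argument even if such a $\gamma$ existed --- Hasse's theorem requires local normhood at \emph{all} places, and at inert (unramified) primes of $E/L$ the condition on $\gamma$ is that its valuation be even, which you did not check. The issue is moot once the factorization of $(1+i)$ is corrected, but the reasoning as written is incomplete.

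Finally, your last paragraph dispatches the comparison with alternative towers $(E'/L')$ too quickly. The paper splits into the cases $L'=L$ with $L_2'\neq L_2$, and $L'\neq L$ (cubic of conductor $\mathfrak m$ with $\Nm(\mathfrak m)>13$), and in each case verifies the inequality $D_{E'/L'}(\gamma')\ge D_{E/L}(1+i)$ explicitly, using that the ``no non-norm unit'' phenomenon persists for every quadratic $E'/L'$ over $\Q(i)$. Your appeal to ``the sharpness already established in Theorem~\ref{thm:res4}'' is not by itself enough to bound $D_{E'/L'}(\gamma')$ from below, since that theorem controls $\disc(E'/\Q(i))$ but says nothing about the minimal non-norm $\gamma'$ in the new $\Or_{L'}$.
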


\begin{proof}:

\noindent\emph{\underline{Step 1, 2.}}  
Let $E$ and $L$ be as in the statement of the theorem. By Theorem~\ref{thm:res4}, the same choice of field $E$ ensures the minimality of the discriminant $\disc(E/\Q(i)) = 3^6\cdot 13^4$ among all possible discriminants of cyclic sextic extensions over $\Q(i)$.  
 
Let $L = \Q(i,\beta)$. A pair of smallest primes in $\Or_L$ is $(\mathfrak s_{2}, \mathfrak s_{13})$, of norms $2^3$ and 13, respectively. Consequently, $\lambda_{E,L} = \frac{2^3 13}{3^3} < 1$ (cf. \eqref{eqn:constant}), and the norm $\Nm_{\Z[i]}(\disc(\Or_{\nat}/\Z[i])$ will attain its smallest possible value for a unit non-norm element $\gamma \in \Or_L^{\times}$.

\medskip 

\noindent\emph{\underline{Step 3.}}  
We encounter here the same situation as in Theorem~\ref{thm:res3}. On the one hand, since $\Nm_{L/\Q}(x)=\Nm_{\Q(i)/\Q}(\Nm_{L/\Q(i)}(x))$ and the norm of every unit in $\Z[i]$ is $1$, every element in $\Or_L^{\times}$ maps to a square in the residue field. On the other hand, $[E:L]=2$, and \eqref{eqn:nonnorm} tells us that $\Or_L^{\times}\cap \Nm_{E/L}(E^{\times})$ consists of those elements in $\Or_L^{\times}$ which map to squares in the residue field of any ramifying prime. Consequently, every element in $\Or_L^{\times}$ is in the image of the map $ \Nm_{E/L}$, and there exists no  non-norm unit element for $E/L$. We thus need to balance the sizes of $\disc(E/\Q(i))$ and $\gamma$ in \eqref{eqn:disc} to achieve minimality of $\Nm(\disc(\Or_{\nat}/\Z[i]))$. We observe that this argument holds for any choice of $E$ and $L$, as long as $\Q(i)\subset L$ and $[E:L] = 2$.
 
We fix $\gamma = 1+i$ of norm $2^3$, corresponding to the smallest possible norm in $\Or_L$. Substituting into \eqref{eqn:balance}, we get $D_{E/L}(\gamma) = 3^6\cdot 13^4\cdot 2^3$. We conclude the proof by showing that $D_{E/L}(\gamma) \le D_{E'/L'}(\gamma')$ for any other choice of $E', L'$ and $\gamma'$ under the given assumptions.

  Any possible $E' \ne E$ is of the form $E' = L_2'L$ with $L_2'\ne L_2$ or $E' = L_2'L'$ with $L'\ne L$ ($L_2'$ could be equal to $L_2$) and $[L':\Q(i)] = 3$. In the first case, by the minimality arguments in the choice of $L_2$ and $\gamma$, for all choices of $L_2'$ and $\gamma' \in \Or_L \setminus \Or_L^{\times}$, 
\begin{align*}
	3^6\cdot 13^4\cdot 2^3\le \Nm(\disc(L_2'L/\Q(i))) \cdot \Nm(\gamma),
\end{align*} 
and so $D_{E/L}(\gamma) \le D_{L_2'L/L}(\gamma')$.

Suppose next that $E' = L_2'L'$ with $L'\ne L$ and $[L':\Q(i)] = 3$. As we saw in Step~1 of the proof of Theorem~\ref{thm:res4}, the conductor of a cubic extension $L'\ne L$ of $\Q(i)$ is ideal $\mathfrak m \in \Z[i]$  of norm greater than 13 and such that $|(\Z[i]/\mathfrak m)^\times|$ is a multiple of 12. By the conductor-discriminant formula, the corresponding extension $L'$ will have discriminant $\mathfrak m^2$, and by the minimality in the choice of $L_2$, $\Nm(\disc (L_2L'/\Q(i))=3^6 \Nm(\mathfrak m)^4\le \disc(L_2L_3' /\Q(i))$ for any quadratic extension $L_2'$ of $\Q(i)$. Consequently, 
\begin{align*}
	3^6 \Nm(\mathfrak m)^4\cdot 2\le \disc(L_3'L_2 /\Q(i))\cdot \Nm(\gamma') = D_{E'/L'}(\gamma')
\end{align*}  
for all choices of $\gamma' \in \Or_L'\setminus \Or_L^{\times}$.
Now, 
\begin{align*}
	3^6 \Nm(\mathfrak m)^4\cdot 2\ge 3^6\cdot 13^4\cdot 2^3   \Leftrightarrow  \Nm (\mathfrak m) \ge 13\sqrt 2> 13.
\end{align*}
We conclude that $D_{E/L}(\gamma) \le D_{E'/L'}(\gamma')$ for all possible choices of $E', L'$ and $\gamma'$, and the theorem follows.
 \qed
\end{proof}

\section{Conclusions}

In this article we have introduced the reader to a technique used in multiple-input multiple-output wireless communications known as space--time coding. Within this framework, we have recalled several design criteria which have been derived in order to ensure a good performance of codes constructed from representations of orders in central simple algebras. In particular, we have explained why it is crucial to choose orders with small discriminants as the underlying algebraic structure in order to maximize the coding gain. While maximal orders achieve the minimal discriminant and hence the maximal coding gain among algebraic space--time codes, we have motivated why in practice it may sometimes be favorable to use the so-called natural orders instead. However, one should bare in mind that orthogonal lattices have yet additional benefits such as simple bit labeling and somewhat simpler encoding and decoding, so there is a natural tradeoff between simplicity and coding gain. 

For the base fields $F=\Q$ or $F$ imaginary quadratic (corresponding to the most typical signaling alphabets), and pairs of extension degrees $(n_t,n_r)$ in an asymmetric channel setup, we have computed an explicit number field extension $(E/L)$ and a non-norm element $\gamma \in \mcl\setminus\left\{0\right\}$ giving rise to a cyclic division algebra whose  natural order $\Or_{\nat}$  achieves the minimum discriminant among all cyclic division algebras with the same degree and base field assumptions. This way we have produced explicit space--time codes attaining the optimal coding gain among codes arising from natural orders.

\begin{acknowledgements}
A. Barreal and C. Hollanti are financially supported by the Academy of Finland grants \#276031, \#282938, and \#303819, as well as a grant from the Foundation for Aalto University Science and Technology.

\medskip 
The authors thank Jean Martinet, Ren\'e Schoof, and Bharath Sethuraman for their useful suggestions, and the anonymous reviewers for their valuable comments to improve the quality of the manuscript. 
\end{acknowledgements}


\begin{thebibliography}{99}

\bibitem{TSC}
V. Tarokh, N. Seshadri, and A. R. Calderbank, 
Space--Time Codes for High Data Rate Wireless Communication: Performance Criterion and Code Construction, 
\textit{IEEE Transactions on Information Theory} 44(2), pp. 744--765, 1998.

\bibitem{BR}
J.-C. Belfiore and G. Rekaya, 
Quaternionic Lattices for Space--Time Coding,
\textit{Proceedings of the IEEE Information Theory Workshop}, 2003.

\bibitem{SRS}
B. A. Sethuraman, B. S. Rajan, and V. Shashidhar, 
Full-Diversity, High-Rate Space--Time Block Codes from Division Algebras, 
\textit{IEEE Transactions on Information Theory} 49(10), pp. 2596--2616, 2003.

\bibitem{BRV}
J.-C. Belfiore, G. Rekaya, and E. Viterbo,
The Golden Code: a $2\times 2$ Full-Rate Space--Time Code with Non-vanishing Determinants, 
\textit{IEEE Transactions on Information Theory} 51(4), pp. 1432--1436, 2005.

\bibitem{ORBV}
F.~Oggier, G.~Rekaya, J.-C. Belfiore, and E.~Viterbo, 
Perfect Space--Time Block Codes, 
\textit{IEEE Transactions on Information Theory} 52(9), pp. 3885--3902, 2006.

\bibitem{ESK}
P. Elia, B. A. Sethuraman, and P. V. Kumar, 
Perfect Space--Time Codes for Any Number of Antennas, 
\textit{IEEE Transactions on Information Theory} 53(11), pp. 3853--3868, 2007.

\bibitem{HLL}
C. Hollanti, J. Lahtonen, and H.-f. Lu, 
Maximal Orders in the Design of Dense Space--Time Lattice Codes,
\textit{IEEE Transactions on Information Theory} 54(10), pp. 4493--4510, 2008.

\bibitem{VHLR}
R. Vehkalahti, C. Hollanti, J. Lahtonen, and K. Ranto, 
On the Densest MIMO Lattices From Cyclic Division Algebras, 
\textit{IEEE Transactions on Information Theory} 55(8), pp. 3751--3780, 2009.

\bibitem{HL}
C. Hollanti and H.-f. Lu,
Construction Methods for Asymmetric and Multiblock Space--Time Codes,
\textit{IEEE Transactions on Information Theory} 55(3), pp. 1086--1103, 2009.

\bibitem{Re}
I. Reiner,
Maximal Orders, 
\textit{London Mathematical Society Monographs New Series} 28, 2003.

\bibitem{Veh}
R. Vehkalahti, 
Class Field Theoretic Methods in the Design of Lattice Signal Constellations, 
\textit{TUCS Dissertations} 100, 2008.
 
\bibitem{VHO}  
R. Vehkalahti, C. Hollanti, and F. Oggier, 
Fast-Decodable Asymmetric Space-Time Codes From Division Algebras, 
\emph{IEEE Transactions on Information Theory} 58(4), pp. 2362--2385, 2012.
 
 
\bibitem{CS}
H. Cohen and P. Stevenhagen, 
Computational Class Field theory, 
\textit{Algorithmic Number Theory, MSRI Publications} 44, pp. 497--534, 2008.

\bibitem{JM} 
J. S. Milne,
Algebraic Number Theory (v3.07), 
\textit{Graduate course notes}, 2017, available at www.jmilne.org/math/CourseNotes/.

\bibitem{JM2} 
J. S. Milne, 
Class Field Theory (v4.02), 
\textit{Graduate course notes}, 2013, available at www.jmilne.org/math/CourseNotes/.

\bibitem{HW} 
R. H. Hudson and K. S. Williams, 
The Integers of a Cyclic Quartic Field, 
\textit{Rocky Mountains Journal of Mathematics} 20(1), pp. 145--150, 1990.
  
\bibitem{HSW} 
J.G. Huard, B.K. Spearman, and K. S. Williams, 
Integral Bases for Quartic Fields wit Quadratic Subfields, 
\textit{Carleton-Ottawa Mathematical Lecture Notes Series} 7, pp. 87--102, 1986.

\bibitem{BeMOl}
A.-M. Berg\'e, J. Martinet, and M. Olivier, 
The Computation of Sextic Fields with a Quadratic Subfield,
\textit{Mathematics of Computation} 54(190), pp. 869--884, 1990.





  
 

  





\end{thebibliography}
\end{document}